\xpatchcmd{\@todo}{\setkeys{todonotes}{#1}}{\setkeys{todonotes}{inline,#1}}{}{}
\newenvironment{enumeratea}{\begin{enumerate}[\upshape (a)]}{\end{enumerate}}
\newtheorem{thm}{Theorem}[section]
\newtheorem{lem}[thm]{Lemma}
\newtheorem{cor}[thm]{Corollary}
\newtheorem{prop}[thm]{Proposition}
\newtheorem{defn}[thm]{Definition}
\newtheorem{rem}[thm]{Remark}
\newtheorem{ass}[thm]{Assumption}
\newtheorem{ex}[thm]{Example}
\renewcommand{\le}{\leqslant} 
\renewcommand{\ge}{\geqslant} 
\renewcommand{\leq}{\leqslant} 
\renewcommand{\geq}{\geqslant} 
\newcommand{\ra}{\rangle}
\newcommand{\la}{\langle}
\newcommand{\ind}{\mathds{1}}
\newcommand{\eps}{\varepsilon}
\newcommand{\norm}[1]{\left\Vert#1\right\Vert}
\newcommand{\abs}[1]{\left\vert#1\right\vert}
\newcommand{\ie}{\emph{i.e.,}}
 \let\gb=\beta  \let\gd=\delta 
     \let\gl=\lambda           \let\gs=\sigma \let\gt=\tau
         \let\gS=\Sigma  
\newcommand{\cD}{\mathcal{D}}
\newcommand{\cG}{\mathcal{G}}
\newcommand{\mv}[1]{\boldsymbol{#1}}
\newcommand{\mvX}{\boldsymbol{X}}
\newcommand{\mva}{\boldsymbol{a}}
\newcommand{\mvg}{\boldsymbol{g}}
\newcommand{\mvm}{{m}}
\newcommand{\mvv}{\boldsymbol{v}}
\newcommand{\mvx}{\boldsymbol{x}}
\newcommand{\mvgs}{{\sigma}}
\newcommand{\mvgt}{{\tau}}
\newcommand{\bE}{\mathbb{E}}
\newcommand{\bP}{\mathbb{P}}\newcommand{\bR}{\mathbb{R}}
\DeclareMathOperator{\E}{\mathds{E}}
\DeclareMathOperator{\var}{Var}
\DeclareMathOperator{\sech}{sech}
\renewcommand{\odot}{\circ}
\newcommand{\dout}{d^{\mathrm{out}}}
\newtheorem{theorem}{Theorem}[section]
\newtheorem{remark}[theorem]{Remark}
\begin{document}

\title[Joint estimations for spin glasses]{Joint parameter estimations for spin glasses}
\author[Chen]{Wei-Kuo Chen$^\star$}
\author[Sen]{Arnab Sen$^\dagger$}
\author[Wu]{Qiang Wu$^\ddagger$}
\address{School of Mathematics, University of Minnesota, 127 Vincent Hall 206 Church St. SE Minneapolis, MN 55455}
\email{$^\star$wkchen@umn.edu}
\email{$^\dagger$arnab@umn.edu}
\email{$^\ddagger$wuq@umn.edu}
\date{\today}
\subjclass[2020]{Primary: 62F12, 62F10, 82B44.}
\keywords{Spin glasses, Sherrington-Kirkpatrick model, Pseudolikelihood estimator}
\thanks{}
\begin{abstract}
	Spin glass models with quadratic-type Hamiltonians are disordered statistical physics systems with competing ferromagnetic and anti-ferromagnetic spin interactions. The corresponding Gibbs measures belong to the exponential family parametrized by (inverse) temperature $\gb>0$ and external field $h\in\bR$. Given a sample from these Gibbs measures, a statistically fundamental question is to infer the temperature and external field parameters.
 In 2007, Chatterjee~(Ann.~Statist.~35 (2007),~no.~5,~1931–1946) first proved that in the absence of external field $h=0$, the maximum pseudolikelihood estimator for $\gb$ is $\sqrt{N}$-consistent under some mild assumptions on the disorder matrices. 
 It was left open whether the same method can be used to estimate the temperature and external field simultaneously. 
 In this paper, under some easily verifiable conditions, we prove that the bivariate maximum pseudolikelihood estimator is indeed jointly $\sqrt{N}$-consistent for the temperature and external field parameters. The examples cover the classical Sherrington-Kirkpatrick model and its diluted variants. 
 

\end{abstract}

\maketitle

\section{Introduction and Main Results}

Ising model possesses the Gibbs measure that belongs to the bivariate exponential family and takes the form, for $\mvgs\in \{-1,+1\}^N$, 
\[
P_{\gb,h}(\mvgs) =  \frac{1}{Z_N(\beta,h)}\exp(\gb  \la\mvgs, J\mvgs \ra/2 +h \la \mvgs, \mv1 \ra ),
\]
where $\gb>0$ is the (inverse) temperature, $h\in \bR$ is the external field, and $J\in \bR^N\times \bR^N$ is a real-valued coupling matrix. In the past decades, estimation problems for the Ising model have received a great deal of attention from various communities due to its broad downstream applications, such as in statistics and computer science etc. One of the major research branches is, if the coupling matrix $J$ is known, about estimating $\gb$, $h$, or both from a given sample of the Gibbs measure. One classical approach to this estimation problem is based on the maximum likelihood method. In the case of mean field Ising model, also known as Curie-Weiss model, where $J$ is the adjacency matrix of complete graph with proper scaling, Comets and Gidas~\cite{CG91} showed that for $\gb>0$ and $h\neq 0$, the maximum likelihood estimator (MLE) for $\gb$ given $h$ or vice versa is always $\sqrt{N}$-consistent. However, joint estimation of $(\gb,h)$ is impossible. In some cases, maximum likelihood estimation for the Ising model can be very challenging since it involves the normalizing constant $Z_N(\gb,h)$ whose computation, in particular, is nontrivial for models with $J$ having both positive and negative entries. For example, in the setting of spin glass models, the interaction matrix $J$ consists of random entries; as a result, understanding the limiting behavior of the log-normalizing constant as $N\to \infty$ is generally a challenging problem and only a few cases are solvable in the literature. Arguably, one of the most intriguing results was the famous Parisi formula~\cite{Par79,Par80} that provided a variational representation for this limit established in the seminal work of Talagand \cite{Tal06}. Nonetheless, it is still a mystery to understand spin glass models for large but finite $N$.

Due to the intractability of MLE approach, it is desirable to seek for some other candidate methods that do not involve computing the log-normalizing constants. To this end, the pseudolikelihood approach~\cite{Bes74,Bes75} was proposed, where the pseudolikelihood function is defined as the product of conditional distributions such that the normalizing constant gets cancelled out. In this case, the maximum pseudolikelihood estimator (MPLE) can indeed alleviate the issue of dealing with a possibly complicated normalizing constant.  

The MPLE approach has been studied a lot recently due to its efficiency and simplicity. The first work for the Ising-type model was initiated by Chatterjee~\cite{Chat07}, where the author set $h=0$ and considered the single-parameter estimation problem for $\gb>0$. Under the assumption that $J$ has bounded operator norm 
and $\liminf_{N\to\infty}N^{-1}\ln Z_N(\beta,h)>0$, it was shown that the MPLE is $\sqrt{N}$-consistency estimation for $\gb$. In particular, the result includes examples of spin glasses, such as the Sherrington-Kirkpatrick (SK) model~\cite{SK75} and the Hopfield model~\cite{Hop82}. Afterwards, the result of~\cite{Chat07} was extended to other generalized Ising models~\cite{BM18,MSB22}. All those results have been restricted to the single parameter estimation setting. As we saw in the Curie-Weiss model, joint estimation is generally a much more challenging task. In fact, sometimes MPLE does not be even exist as opposed to the scenario for the single parameter. Incidentally, extending the single parameter estimation results to the joint parameters estimation setting was proposed as one of the open problems by Chatterjee~\cite{Chat07}. An attempt to resolve this question was obtained by Ghosal and Mukherjee \cite{GM20} in the setting of the ferromagnetic Ising model where $J$ has nonnegative entries and they established a set of sufficient conditions under which joint estimation for $(\gb,h)$ is possible with $\sqrt{N}$-consistency. 

 In this paper, we aim to show that MPLE is a $\sqrt{N}$-jointly consistent estimator for $(\beta,h)$ allowing $J$ to possess positive and negative entries. More precisely, the matrix $J$ is a weighted adjacency matrix of a graph, where the weights are i.i.d.\ centered random variable with finite third moment and the graph requires that its averaging degree diverges at least logarithmically fast and is comparable to the maximum degree. This setting covers a collection of spin glass models such as the celebrated SK model and its diluted variants. We show that the validity of our result requires checking two major conditions: the boundedness of the operator norm of $J$, and the positivity of the Hessian matrix of the pseudolikelihood function, see Theorem \ref{thm:exs-cons}. Among them, the second condition plays a central role and it also appeared in \cite{GM20}. Incidentally, the authors therein verified this positivity condition critically relying on the nonnegativity assumption of coupling coefficients which allowed them to use the nonlinear large deviation theory for random graphs~\cite{CD16,BM17,EG18,Chat17}. However, such technique does not seem to work when the coupling matrix $J$ has both positive and negative entries. Until now, it remained as a main obstacle to extend joint estimation results beyond the non-negative coupling coefficients setting. We are able to check the non-singularity condition with high probability in the setting of random coupling matrices via
 the small ball probability arguments~\cite{RV08, Ver14}.
 
\subsection{Notation}
Let $\cG:=([N], E)$ be a graph with vertex set $[N]=\{1,2,\ldots,N\}$ and edge set $E$. Denote by $d_i$ and $d$ the \emph{degree} of vertex $i\in[N]$ and the \emph{average degree} of $\cG$, respectively. In addition, $d_{\max}: = \max_{i\in[N]} d_i$ denotes the \emph{maximum degree} of $\cG$. Let $G=(g_{ij})_{i,j=1}^N$ be a symmetric matrix and vanishes along the diagonal, \ie~$g_{ii} = 0$ for $i\in[N]$. Set \begin{align}\label{eq:J-form}
    J:= \frac{1}{\sqrt{d}} A \odot G,
\end{align}
where $\circ$ denotes the Hadmard product and 
$A$ is the adjacency matrix of $\cG$, that is,
\begin{align*}
    A_{ij} = \begin{cases}
        1, \quad \text{if $(i,j)\in E$}, \\
        0, \quad \text{if $i=j$ or $(i,j) \notin E$. }
    \end{cases}
\end{align*}

If $U_N$ and  $V_N$ are two non-negative random variables sampled from some probability measure $P$, we say $U_N = O_p(V_N)$ if $(U_N/V_N)_{N \ge 1}$ is a tight family under $P$, i.e., for each $\epsilon>0$, there exists $C$ such that 
\[ P( U_N/ V_N \le C) \ge  1- \epsilon \ \ \text{ for all } N \ge 1. \]
The notation $\Omega_p$ is defined similarly. 

\subsection{Basic Setting of MPLE}
For any $\gb>0, h \in \mathbb{R}$, the Gibbs measure associated to $J$ is defined as
\begin{align}\label{eq:model-def}
    P_{\gb,h}(\mvgs) = \frac{\exp(\gb  \la J\mvgs, \mvgs \ra/2 + h \la \mvgs, \mv1 \ra)}{Z_N(\gb,h)},
\end{align}
for all $\mvgs \in \gS_N:=\{-1,+1\}^N$,
where \[
Z_N(\gb,h)=\sum_{\mvgs \in \gS_N} \exp(\gb   \la J\mvgs, \mvgs \ra/2 + h\la \mvgs, \mv1 \ra)\]
is the normalizing constant, also known as the partition function in statistical physics. For any $\tau\sim P_{\beta,h}$ and $f:\Sigma_N\to\mathbb{R}$, denote by $\la f(\tau)\ra$ the expectation of $f(\tau)$ with respect to the Gibbs measure. The free energy is defined as 
\begin{align}
    F_N(\gb,h) = \frac1N \log Z_N(\gb,h).
\end{align}

We proceed to formulate the MPLE for $(\beta,h).$ In the sequel, we assume that $\gb_0,h_0$ are the ground truth parameters and a sample $\mvgt \sim P_{\gb_0,h_0}(\cdot)$. For $\mvgs \in \{-1,+1\}^N$, the \emph{pseudolikelihood function} is defined as
\begin{align}
  \nonumber  L(\gb, h| \mvgs)& = \log \prod_{i=1}^N P_{\gb, h}(\gs_i | \gs_{j\neq i}) \\
   \label{eq:likeli} & = \sum_{i=1}^N \gs_i \Bigl(\gb \sum_{j=1}^N  J_{ij}\gs_j + h\Bigr) -\sum_{i=1}^N \log \cosh \Bigl(\gb \sum_{j\neq i} J_{ij}\gs_j + h\Bigr) - \log 2.
\end{align}
Taking the partial derivatives of the pseudolikelihood function w.r.t.\ the parameters $\gb,h$, we have the corresponding \emph{score functions},
\begin{align}\label{eq:score}
S(\gb,h|\mvgs)  & :=  \frac{\partial L}{\partial \gb} = \sum_{i=1}^N m_i(\mvgs) ( \gs_i - \tanh(\gb m_i(\mvgs)+h)), \\
Q(\gb,h|\mvgs) & := \frac{\partial L}{\partial h} = \sum_{i=1}^N  ( \gs_i - \tanh(\gb m_i(\mvgs)+h)),\label{eq:score2}
\end{align}
where $$
m_i(\mvgs) := \sum_{j=1}^N J_{ij} \gs_j = \sum_{j\neq i} J_{ij} \gs_j.$$ 
We further compute the negative Hessian matrix $H(\gb,h|\mvgs)$ for later use as
\begin{align}\label{eq:hess}
  H(\gb, h|\mvgs) := \begin{pmatrix}
        &  \sum_{i=1}^N m_i(\mvgs)^2 \theta_i(\gb,h|\mvgs) &  \sum_{i=1}^N m_i(\mvgs) \theta_i(\gb,h|\mvgs) \\
        &  \sum_{i=1}^N m_i(\mvgs) \theta_i(\gb,h|\mvgs)&  \sum_{i=1}^N \theta_i(\gb,h|\mvgs)
    \end{pmatrix},
\end{align}
where for $i\in [N],$
$
\theta_i(\gb,h|\mvgs) := \sech^2(\gb m_i(\mvgs) +h).
$
From a straightforward computation, the determinant of the Hessian is 
\[
\abs{H(\gb,h|\mvgs)} = \frac{N^2}{2}  \widetilde{T}_N(\gb,h|\mvgs),
\]
where
\begin{align}\label{eq:det-hess}
    \widetilde{T}_N(\gb,h|\mvgs) := \frac{1}{N^2} \sum_{i,j\in[N]} \theta_i(\mvgs)\theta_j(\mvgs) (m_i(\mvgs)-m_j(\mvgs))^2.
\end{align}
A quantity accompanied with $\widetilde T_N(\gb,h| \mvgs)$ is 
\begin{align}
  T_N(\gb,h|\mvgs) := \frac{1}{N^2}\sum_{i,j\in[N]} (m_i(\mvgs) - m_j(\mvgs))^2 = \frac2N \sum_{i\in [N]}(m_i(\mvgs) - \bar{\mvm}(\mvgs))^2,
\end{align}
where $$\bar{\mvm}(\mvgs):= \frac1N \sum_{i\in [N]} m_i(\mvgs).$$
For notational brevity, we will often suppress the dependence of $\theta_i, T_N,$ and $\widetilde T_N$ on $\beta$ and $h$.

We assume that the coupling matrix $J$ is given and we have access to a single sample from the ground truth $P_{\gb_0,h_0}$. Our ultimate aim is to determine the conditions on the model so that the pseudolikelihood estimator 
exists and is consistent. The following theorem presents a set of sufficient conditions to attend this purpose.

\begin{thm}[Existence and consistency of MPLE]\label{thm:exs-cons}
   Suppose that the coupling matrix $J\in \bR^{N\times N}$ is symmetric and has zeros entries along the diagonal. For fixed $\beta_0>0$ and $h_0\in \mathbb{R}$, let  $\mvgt\in \{-1,+1\}^N$ be a sample drawn from the Gibbs measure $P_{\gb_0,h_0}$. Assume that the following conditions hold. 
   \begin{enumerate}
     \item[$(i)$]  Boundedness of $J$$:$
     \begin{align*} 
     \| J\|  = O(1), \quad \text{where $\|J\|$ is the operator norm of $J$.}
     \end{align*}
          
    \item[$(ii)$] Positivity of the Hessian\,$:$ Under $P_{\gb_0,h_0}$, 
     \begin{align*} 
     \widetilde{T}_N(\mvgt) =  \widetilde{T}_N(\beta_0, h_0| \mvgt) = \Omega_p(1).
     \end{align*}
   \end{enumerate}
   Then with $P_{\gb_0,h_0}$-probability tending to one, the MPLE exists, i.e., the bivariate equation 
   \begin{align}\label{eq:MPLE}
    (S(\gb,h|\mvgt), Q(\gb,h|\mvgt)) = (0,0),
\end{align}
has a unique solution $(\hat \beta(\mvgt), \hat h(\mvgt))  \in \mathbb{R}^2$. Moreover, $(\hat \beta(\mvgt), \hat h(\mvgt))$ is a $\sqrt{N}$-consistent estimator of the ground truth $(\gb_0,h_0)$ in the sense that under $P_{\gb_0,h_0}$, 
    \[
    \| (\hat{\gb}(\mvgt) - \gb_0, \hat{h}(\mvgt) - h_0) \|_2 = O_p(N^{-1/2}).
    \]
 \end{thm}

Note that this theorem holds for arbitrary real-valued symmetric matrices $J$ with zeros along the diagonal and they do not need to be of the form \eqref{eq:J-form}. We further remark that the results can be easily extended to include the regime $\gb_0<0$. For notational and technical convenience, we focus on the regime $\gb_0>0$ as stated above.

\begin{rem}\label{remark1.1}
\rm
   Under different assumptions, Ghosal-Mukherjee \cite{GM20} obtained similar existence and consistency results as Theorem 1.2, where the matrix $J$ has nonnegative entries and uniformly bounded row sums. 
    Our proof for the consistency part in Theorem~\ref{thm:exs-cons} is similar to that of Theorem 1.2 in~\cite{GM20}.   However, the approach for the existence part in the same theorem does not apply to our setting.
    
\end{rem}

\begin{remark}
    \rm
If $h=0$, Theorem \ref{thm:exs-cons} reduces to the single parameter estimation problem for $\beta$ studied in \cite{Chat07} and it was known that the conditions (i) and (ii) imply the invertibility of the corresponding score function analogous to (iii). However, in the joint estimation problem, (i) and (ii) are not enough to imply (iii) as can be seen in the Curie-Weiss model.

\end{remark}

\subsection{Main Results}

Our work considers the setting of the disordered spin system, namely, $G$ is a symmetric matrix of independent random entries. The main result establishes three major theorems that validate the conditions (i), (ii), and (iii) in Theorem~\ref{thm:exs-cons} individually with high probability. As a consequence, we conclude that the pseudolikelihood estimator exists and is $\sqrt{N}$-consistent in the joint estimation problem. To state our main results, we begin by introducing  the following assumption on the graph structure $\mathcal{G}$.

\begin{ass}\label{ass:graph}
    For the underlying graph $\cG = ([N],E)$, we assume the average degree $d$ and the maximum degree $d_{\max}$ satisfy 
    \begin{enumerate}
        \item[$(i)$] $d \gg \log N$,
        \item[$(ii)$] $d_{\max}\le Cd$, 
    \end{enumerate}
    where  $C>0$ is a constant independent of $N.$
\end{ass}

Heuristically, the above condition specifies that the underlying graph can neither be too sparse nor have some vertices with very large degrees. Ghosal and Mukherjee's work focused on non-disordered systems, where their coupling matrix $J$ is defined as  $J  = \frac{1}{d} \cdot A$, with $A$ being the adjacency matrix of the underlying graph and $d$ the average degree. This contrasts with our setup, which employs a scaling factor of $\frac{1}{\sqrt{d}}$ and assigns independent, mean-zero random weights to each edge. Their paper operates under the standing assumption of row sum boundedness for $J$, implying $d_{max} \le C d,$ a condition we also assume.
    Furthermore, their results  show that 
    \begin{itemize}
        \item[(i)] If $d = O(1)$, the MPLE is $\sqrt{N}$-consistent. 
        \item[(ii)] If $d \to \infty$,  depending on whether the underlying graph is irregular or regular, the MPLE is  either $\sqrt{N}$-consistent and the estimation is not possible respectively. 
    \end{itemize}
    To compare with our results, we do not have any results in the regime $d = O(1)$ due to the our standing assumption $d\gg \log N$. Second, under assumption  $d \gg \log N$, we do not need any conditions on the regularity of the underlying graph.

Recall that a random variable $X$ is $\kappa^2$-subgaussian if $\mathbb{E}X=0$ and $\mathbb{E} e^{sX}\leq e^{\kappa^2 s^2/2}$ for all $s\in \mathbb{R}.$ We say that  $G=(g_{ij})_{i,j\in[N]}$ is a symmetric $\kappa^2$-subgaussian matrix with vanishing diagonal if its upper triangular entries are independent $\kappa^2$-subgaussian. Now we state the first result that verifies the boundedness of the operator norm of $J$.

\begin{thm}\label{thm:J-norm}
Suppose that $G$ is a symmetric $\kappa^2$-subgaussian matrix with vanishing diagonal and Assumption~\ref{ass:graph} holds. 
Then there exist constants $C,K>0$, depending on~$\kappa$, such that
\[
\bP(\norm{J} \ge C) \le K \exp(-d/\log N).
\]

\end{thm}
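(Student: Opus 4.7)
The plan is to reduce the claim to a spectral bound on $M := A \odot G$ and use the moment (trace) method, which handles subgaussian entries directly without a delicate truncation step. Since $\|J\| = \|M\|/\sqrt{d}$ and Assumption~\ref{ass:graph}$(ii)$ gives $d_{\max}\le Cd$, it suffices to show
\[
\bP\bigl(\|M\|\ge C_0\sqrt{d_{\max}}\bigr) \;\le\; K\exp(-d/\log N)
\]
for suitable constants $C_0, K$ depending only on $\kappa$.

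For any positive integer $k$, symmetry of $M$ gives $\|M\|^{2k}\le\tr(M^{2k})$, and Markov's inequality yields $\bP(\|M\|\ge t)\le t^{-2k}\,\E\tr(M^{2k})$. Expanding the trace as
\[
\E\tr(M^{2k})=\sum_{(i_1,\ldots,i_{2k})\in[N]^{2k}} A_{i_1 i_2}\cdots A_{i_{2k} i_1}\,\E\bigl[g_{i_1 i_2}\cdots g_{i_{2k} i_1}\bigr],
\]
we obtain a sum over closed walks of length $2k$ in $\cG$, and by independence and centeredness of the $g_{ij}$'s only walks whose edges are each traversed an even number of times contribute. I would classify such walks by their used-edge multiset: $r$ distinct edges with even multiplicities $2m_1,\ldots,2m_r$ satisfying $\sum_s m_s=k$.

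The dominant contribution comes from \emph{tree walks} ($r=k$ and every $m_s=1$), which trace out a tree with $k$ edges as a Dyck-type closed walk: the number of rooted plane trees with $k$ edges is the Catalan number $C_k\le 4^k$, and each admits at most $d_{\max}^k$ embeddings into $\cG$ from a given root, so there are at most $N\cdot 4^k d_{\max}^k$ tree walks in total, each contributing $\kappa^{2k}$ in expected product. Non-tree walks are controlled by a standard F\"uredi--Koml\'os-style grouping by the isomorphism class of the underlying multigraph, combined with the subgaussian moment bound $\E g^{2m}\le (2m)^m\kappa^{2m}$. The total bound takes the form
\[
\E\tr(M^{2k})\;\le\;N\cdot(C_1\kappa^2 d_{\max})^k
\]
for some $C_1>0$ depending only on $\kappa$, valid for $k$ in a range that includes $k=O(\log N + d/\log N)$. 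Plugging this into the moment inequality with $t := 2\sqrt{C_1}\,\kappa\sqrt{d_{\max}}$ gives $\bP(\|M\|\ge t)\le N\cdot 4^{-k}$, and choosing $k := \lceil c_0(\log N + d/\log N)\rceil$ for a suitable $c_0$ makes the right-hand side at most $K\exp(-d/\log N)$. Since $d_{\max}\le Cd$, this yields $\|J\|\le 2\kappa\sqrt{C_1 C}$ with the required probability.

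The main obstacle is establishing the combinatorial trace bound, specifically the F\"uredi--Koml\'os-type estimate for non-tree walks in a weighted sparse graph: one must carefully count walks by the shape of their underlying multigraph and sum combinatorial factors against subgaussian moment growth. A direct truncation-plus-matrix-Bernstein approach seems insufficient here, because the Bandeira--van Handel-type spectral bound acquires an extra $\sqrt{\log N}$ factor that would only give $\|J\|=O(\sqrt{\log N})$ rather than $O(1)$, and the truncation level needed to remove this factor incurs a prohibitive tail cost in the borderline regime $\log N\ll d\ll(\log N)^2$.
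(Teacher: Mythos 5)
Your proposal takes a genuinely different route from the paper (trace/moment method versus a two-step expectation-plus-concentration argument), but it has a real gap: the entire proof rests on the estimate $\E\tr(M^{2k})\le N(C_1\kappa^2 d_{\max})^k$ for $k$ as large as $c_0(\log N+d/\log N)$, and you assert this via a ``standard F\"uredi--Koml\'os-style grouping'' without carrying it out. That is precisely the hard part. In the SK regime $d=N-1$ you need $k\sim N/\log N$, far beyond the range where the classical F\"uredi--Koml\'os counting is routine; there the non-tree walks with high edge multiplicities, weighted by the subgaussian moment growth $\E g^{2m}\le(2m)^m\kappa^{2m}$ and by the walk-ordering multinomial factors, must be shown not to overwhelm the tree-walk contribution $N\,4^k d_{\max}^k\kappa^{2k}$. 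Checking the two extreme cases (all multiplicity $2$, or one edge of multiplicity $2k$) as you implicitly do is not enough; the intermediate shapes are where such arguments typically fail for large $k$, and no reference or computation is given. As written, the proof is incomplete at its central step.

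Moreover, your stated reason for rejecting the paper's route is based on a misreading of the Bandeira--van Handel bound. In that bound the $\sqrt{\log N}$ factor multiplies $\max_{ij}|b_{ij}|=1/\sqrt{d}$, not the row-norm term, so it contributes $\sqrt{\log N/d}=o(1)$ under Assumption~\ref{ass:graph}$(i)$; together with $\sqrt{d_{\max}/d}=O(1)$ this gives $\E\|J\|=O(1)$ directly, with no truncation anywhere. The paper then obtains the tail $K\exp(-d/\log N)$ not by matrix Bernstein but by applying a convex $1$-Lipschitz concentration inequality for functions of independent subgaussian variables (\cite{CDG23}*{Proposition A.1}) to $\norm{\cdot}$ viewed as a function of the $Nd$ nonzero entries, which yields $\bP(|\norm{J}-\E\norm{J}|\ge t)\le 2\exp(-cdt^2/(\kappa^4\log(Nd)))$ and hence the claim at constant $t$. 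If you want to pursue the moment method, you would need to supply the full combinatorial estimate in the stated range of $k$; otherwise the expectation-plus-concentration argument is both valid and considerably shorter.
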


As we shall see below, the proofs for the above theorem are not too difficult, which borrow some standard ideas and results from the concentration inequalities for sparse random matrices \cite{BvH16,CDG23}. The much more difficult component of this work lies on the verification of the strict positivity of the Hessian stated below.

\begin{thm}\label{thm:TN-check}
Suppose that 
$G$ is a symmetric random matrix with vanishing diagonal so that its upper triangular entries satisfy
        \begin{align}\label{eq:disorder-cond}
        \text{$(g_{ij})_{i>j}$ are independent with} \ \E{g_{ij}} = 0,\,{\E[g_{ij}^2]}=1,\,\text{and}\,\E|g_{ij}|^3 \le B,
        \end{align}
where $B>0$ is some finite constant independent of $N$. Furthermore, suppose that Assumption \ref{ass:graph} $(i)$ is satisfied and $d\to\infty$ as $N\to\infty$. Then there exist positive constants $c_0,c$, and $K$ such that
    \[
    \bP\bigl(\min_{\mvgs}\widetilde{T}_N(\mvgs) \le c_0\bigr) \le K \exp( - cN).
    \] 
\end{thm}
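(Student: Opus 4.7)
My plan is to translate the uniform lower bound on $\widetilde{T}_N$ into a uniform anti-concentration statement about $m(\sigma)=J\sigma$, and then to establish that statement via a conditioning-plus-Berry--Esseen small-ball estimate followed by a union bound over $\sigma\in\{\pm 1\}^N$. The reduction begins from the identity
\[
\widetilde{T}_N(\sigma)=\frac{2 W(\sigma)^2}{N^2}\,\var_{\pi(\sigma)}\!\bigl(m(\sigma)\bigr),
\]
where $W(\sigma)=\sum_i \theta_i(\sigma)$ and $\pi_i(\sigma)=\theta_i(\sigma)/W(\sigma)$. On the event $\{\|J\|\le C_0\}$ from Theorem~\ref{thm:J-norm} (holding with probability $\ge 1-K\exp(-cN)$ thanks to $d\gg \log N$), one has $\sum_i m_i(\sigma)^2\le C_0^2 N$ for every $\sigma$; hence $|\{i:|m_i|>M\}|\le C_0^2 N/M^2$, and choosing $M$ large forces a set of size $\ge N/2$ on which $\theta_i\ge c_M:=\sech^2(\beta_0 M+|h_0|)$, yielding $W(\sigma)\ge c_M N/2$ uniformly in $\sigma$. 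Consequently $\widetilde{T}_N(\sigma)\le c_0$ implies $\var_{\pi(\sigma)}(m(\sigma))\le 2c_0/c_M^2$, and Markov in the $\pi$-measure (with $c=\bar m_{\pi(\sigma)}$) produces a set $I\subseteq[N]$ of size $|I|\ge c_1 N$ satisfying $|m_i(\sigma)-c|\le \eta$ for all $i\in I$, as soon as $c_0 < c_M^2\eta^2/4$.

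For the small-ball step, fix $(\sigma,I,c)$ and bound $\bP(|m_i(\sigma)-c|\le\eta\ \forall i\in I)$ as follows. Select $I_0\subseteq I$ with $|I_0|\ge c_2 N$ such that each $i\in I_0$ has at least $c_3 d$ graph-neighbors outside $I_0$; construct $I_0$ by a peeling procedure (iteratively drop from $I$ any vertex whose current outward degree falls below $c_3 d$, controlling the total dropped through an edge-count inside $I$). Condition on the $\sigma$-algebra $\mathcal{F}$ generated by all disorder entries except those $g_{ij}$ with $i\in I_0,\,j\notin I_0$. For $i\in I_0$ one has $m_i=\mu_i+R_i$ with $\mu_i$ being $\mathcal{F}$-measurable; the residuals $R_i$ depend on pairwise disjoint sets of independent entries and are therefore conditionally independent given $\mathcal{F}$. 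Berry--Esseen (using the third-moment bound \eqref{eq:disorder-cond} and $\var(R_i)\ge c_3$) gives
\[
\bP\!\bigl(|R_i-(c-\mu_i)|\le\eta \mid \mathcal{F}\bigr)\le \frac{C\eta}{\sqrt{c_3}}+\frac{CB}{\sqrt{c_3 d}}=:p_0,
\]
and $p_0$ can be made as small as desired by taking $\eta$ small and using $d\to\infty$. Conditional independence then yields $\bP(\cdot\mid\mathcal{F})\le p_0^{|I_0|}\le e^{-c_4 N}$.

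A union bound over the $2^N$ choices of $\sigma$, the $\sum_{k\ge c_1 N}\binom{N}{k}\le 2^{N H(c_1)}$ choices of $I$, and an $\eta$-net of size $O(\sqrt N/\eta)$ for $c\in[-C_0\sqrt N,C_0\sqrt N]$ (since $|m_i|\le C_0\sqrt N$ on $\{\|J\|\le C_0\}$) gives a total count $e^{C'N}$ of triples $(\sigma,I,c_{\mathrm{net}})$; choosing $\eta$ small enough that $c_4>C'$ closes the argument and delivers $\bP(\min_\sigma \widetilde{T}_N(\sigma)\le c_0)\le K e^{-cN}$.

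I expect the main obstacle to be the peeling step under the minimal Assumption~\ref{ass:graph}$(i)$, because highly irregular degree distributions may force $I_0$ to be much smaller than $c_2 N$. If a positive fraction of $I$ consists of vertices with $d_i\ll d$, their $m_i$ are very small in $L^2$ and cannot participate in the small-ball product; in that regime a complementary lower bound for $\widetilde{T}_N$ should come from the cross-contribution $\tfrac{1}{N^2}\sum_{i\in I,\,j\notin I}\theta_i\theta_j(m_i-m_j)^2$, where $j$ runs over high-degree vertices of $[N]\setminus I$ whose $\theta_j m_j^2$ is of order one, analogous to the boost that isolated vertices provide to $\widetilde{T}_N$. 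Combining the two regimes yields the uniform lower bound $\min_\sigma \widetilde{T}_N(\sigma)\ge c_0$.
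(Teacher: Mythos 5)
Your overall strategy is the same as the paper's: restore independence by conditioning on all disorder entries except an off-diagonal block, apply a Berry--Esseen-type small-ball estimate coordinatewise, tensorize, and union bound over $\mvgs$ and a net for the centering constant. The reduction from $\widetilde T_N$ to an anti-concentration statement via the weighted-variance identity and Markov in the $\pi$-measure is correct. However, there are two genuine gaps.

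First, your argument leans on the event $\{\|J\|\le C_0\}$ holding with probability $1-Ke^{-cN}$, both to force $W(\mvgs)\ge c_M N/2$ and to confine the net for $c$ to $[-C_0\sqrt N, C_0\sqrt N]$. This ingredient is not available here: Theorem~\ref{thm:J-norm} assumes subgaussian entries and Assumption~\ref{ass:graph}$(ii)$, whereas Theorem~\ref{thm:TN-check} assumes only finite third moments (under which $\max_{ij}|g_{ij}|$, hence $\|J\|$, need not be $O(1)$), and even in the subgaussian case the failure probability is only $\exp(-d/\log N)$, which is far larger than $\exp(-cN)$ when, say, $d=(\log N)^2$. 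The paper avoids the operator norm entirely: to get a linear-size set on which $\theta_i$ is bounded below it splits $J_T$ into two triangular matrices with mutually independent entries, applies Chebyshev to each half of $m_i$, and uses binomial concentration (Proposition~\ref{prop:mi-bound}); to control the range of the centering constant it uses the crude bound $|\bar m(\mvgs)|\le N^{-1}\sum_{ij}|J_{ij}|$ with Markov at level $M=2^N$, which costs only a $(1.5)^{-N}$ exception and a net of size $O(2^N)$ — affordable against the $10^{-N}$ small-ball bound.

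Second, the peeling construction of $I_0$ is the step you yourself flag as fragile, and rightly so: under Assumption~\ref{ass:graph}$(i)$ alone a positive fraction of $I$ may consist of vertices of degree $o(d)$, which can never acquire $c_3 d$ outward neighbors, and your proposed rescue via cross-terms against high-degree vertices outside $I$ is a heuristic, not an argument (one would need to show such vertices exist, that their $m_j$ are bounded away from the common value $c$, and that their $\theta_j$ are not tiny — none of which is established). The paper sidesteps the adaptive construction altogether by fixing a single deterministic $(1/(16C),d/4)$-good set $T$ in advance via the max-cut lemma (Lemma~\ref{lem:T-set}), which also eliminates your union bound over the $2^{NH(c_1)}$ choices of $I$; note, though, that this lemma does invoke $d_{\max}\le Cd$, so degree regularity is used in the paper's proof as well. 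As written, your proof does not close.
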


For the sample $\mvgt$ from ground truth $P_{\gb_0,h_0}$, obviously $\widetilde{T}_N(\mvgt) \ge  \min_{\mvgs}\widetilde{T}_N(\mvgs)$ holds. The above theorem suggests that $\widetilde{T}_N(\mvgt) \le c_0$ holds with exponentially decaying annealed probability. 

\begin{rem}
    \rm
If the minimization is over a continuous sphere $S_N:=\{\mvgs\in \mathbb{R}^N:\|\mvgs\|^2=N\}$ instead of discrete cube $\{-1,+1\}^N$, then the statement of Theorem~\ref{thm:TN-check} is not true any more. 
For example,  if  $J$ is nonsingular with probability one,  which, in particular, holds for the SK model, then we take $\mvgs^*  =\gamma J^{-1} \mathbf{1} $ for $\gamma := N^{-1/2}\norm{J^{-1} \mathbf{1}}$, 
    so that $m(\mvgs^*) =  \gamma \mathbf{1}$ and consequently,
    \[\widetilde{T}_N(\mvgs^*) \le T_N(\mvgs^*)  = 0.\]
    Therefore, we always have
    \[
    \min_{\mvgs \in S_N} \widetilde{T}_N(\mvgs) = 0.
    \]    
\end{rem}

As of now, with high probability, all conditions in Theorem~\ref{thm:exs-cons} have been verified under varying assumptions. Putting them all together, we obtain the asserted joint estimation for $(\beta,h).$

\begin{thm}\label{thm:conclu}
   For $\gb_0>0$ and $h_0 \in \mathbb{R}$, suppose $\mvgt \sim P_{\gb_0,h_0}$. Suppose that 
   \begin{enumerate}
        \item[$(1)$] $G$ is a symmetric $\kappa^2$-subgaussian random matrix with vanishing diagonal,
        \item[$(2)$] Assumption~\ref{ass:graph} holds.
   \end{enumerate}
   Then with $\mathbb{P}$-probability tending to one, the bivariate MPLE, $(\hat{\gb}(\mvgt),\hat{h}(\mvgt)),$ is jointly $\sqrt{N}$-consistent.
\end{thm}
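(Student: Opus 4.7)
The plan is to deduce Theorem~\ref{thm:conclu} as a corollary of Theorems~\ref{thm:free-en}, \ref{thm:J-norm}, \ref{thm:TN-check}, and~\ref{thm:exs-cons} via a union bound followed by a Fubini argument. No additional probabilistic machinery will be needed beyond what these four results already supply.

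First, I will record a preliminary reduction: a $\kappa^2$-subgaussian random variable has all absolute moments bounded by constants depending only on $\kappa$, so the third-moment hypothesis $\E|g_{ij}|^3 \le B$ of Theorem~\ref{thm:TN-check} holds automatically. The unit-variance convention appearing in that same theorem, if not already in force, can be enforced without loss of generality by the rescaling $G \mapsto G/v$ and $\gb_0 \mapsto \gb_0 v$ with $v := (\E g_{ij}^2)^{1/2}$, under which the Gibbs measure $P_{\gb_0, h_0}$ is invariant. After this reduction the hypotheses of Theorems~\ref{thm:free-en}, \ref{thm:J-norm}, and \ref{thm:TN-check} are all satisfied.

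Next, writing $\bP$ for the joint law of $(J, \mvgt)$, I will introduce the $J$-measurable events
\[
\cE_N^{(i)} := \{F_N(\gb', h_0) - F_N(0, h_0) \ge c_0\}, \quad \cE_N^{(ii)} := \{\norm{J} \le C\}, \quad \cE_N^{(iii)} := \bigl\{\min_{\mvgs} \widetilde T_N(\mvgs) \ge c_0'\bigr\},
\]
with constants $c_0, C, c_0'>0$ and $0<\gb'<\gb_0$ furnished respectively by Theorems~\ref{thm:free-en}, \ref{thm:J-norm}, \ref{thm:TN-check}. Those theorems together with Assumption~\ref{ass:graph}$(i)$ (which forces $d/\log N \to \infty$) show that each complementary probability vanishes, so $\bP(\cE_N^{(i)} \cap \cE_N^{(ii)} \cap \cE_N^{(iii)}) \to 1$. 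The key observation is that on $\cE_N^{(iii)}$ the deterministic lower bound $\widetilde T_N(\mvgt) \ge c_0'$ holds for \emph{every} realization of the sample $\mvgt$, which is strictly stronger than condition $(iii)$ of Theorem~\ref{thm:exs-cons}.

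Finally, I will apply Theorem~\ref{thm:exs-cons} conditionally on the (now deterministic) coupling matrix $J$ lying in the intersection event above: the theorem then produces, with conditional $P_{\gb_0, h_0}$-probability tending to one, both the unique existence of the MPLE $(\hat\gb(\mvgt), \hat h(\mvgt))$ and the rate $\|(\hat\gb(\mvgt) - \gb_0, \hat h(\mvgt) - h_0)\|_2 = O_p(N^{-1/2})$. A standard Fubini argument will promote this conditional tightness to the unconditional $\sqrt{N}$-consistency under $\bP$ asserted in the theorem. I do not anticipate any serious obstacle at this final stage; all of the heavy probabilistic lifting has been carried out in the three auxiliary theorems, and what remains is essentially bookkeeping built on top of the deterministic consistency statement of Theorem~\ref{thm:exs-cons}.
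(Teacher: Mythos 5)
Your proposal is correct and is essentially the paper's (unwritten) proof: the paper simply asserts that Theorems~\ref{thm:free-en}, \ref{thm:J-norm}, and \ref{thm:TN-check} verify conditions $(i)$--$(iii)$ of Theorem~\ref{thm:exs-cons} with high probability and then invokes that theorem, which is exactly your union bound plus conditioning-on-$J$ argument. Your additional care about the third-moment bound, the unit-variance normalization, and the Fubini step fills in details the paper leaves implicit; the only residual caveat (inherited from the paper's mismatched hypotheses rather than introduced by you) is that the rescaling $G\mapsto G/v$ presumes the entries of $G$ share a common nonzero variance, which the bare independent-subgaussian hypothesis does not literally guarantee.
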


 
\begin{ex}\rm 
Letting the upper triangular entries of $G$ being i.i.d. standard Gaussian, Theorem \ref{thm:conclu} covers the classical SK model, for which $\cG$ is a complete graph so that $d_{\max} = d = N-1 \gg \log N$ satisfying Assumption~\ref{ass:graph}. Another example is the diluted SK model, where $\cG$ is a sparse Erd\"os-R\'enyi random graph $\cG=G(N,p_N)$ with $p_N \gg {\log N}/{N}$.
\end{ex}

\begin{rem}\rm
        It was indicated in \cite{GM20} that the MPLE estimators are not jointly consistent when $J$ is the scaled adjacency matrix of any asymptotically regular graph $\mathcal{G}$ with the average degree diverging to infinity. In particular, consider the case when  $\mathcal{G}=G(N,p_N)$ is a dense Erd\"os-R\'enyi random graph with $p_N=p$ for some fixed $p\in (0,1]$.
        For $t>0$, let $\Theta_t = \{ (\beta, h) \in (0, \infty)^2: t = \tanh(\beta t+ h)\}.$
        It was shown in \cite[Theorem 1.6]{GM20} that for any $(\beta_0,h_0)\in \Theta_t$, the measure $\mathbb{Q}_n \times \mathcal{G} $ is contiguous to $P_{\beta_0, h_0} \times \mathcal{G} $, where $\mathbb{Q}_n$ is the product of Rademacher measures on $\{-1, 1\}^N$ with mean $\tanh (t)$. Now, note that for any $(\beta_0,h_0)\in (0,\infty)^2$, there exists a unique $t\in (0,1)$ that satisfies $t=\tanh(\beta_0t+h_0)$ and that  one can always find a distinct $(\beta_1,h_1)\in \Theta_t.$ This statement readily implies that there does not exist any estimators that are jointly consistent for any $(\beta_0,h_0)\in (0,\infty)^2$. However, for the same interaction graph $\mathcal{G}$, if $J$ is of the form as  \eqref{eq:J-form}, then we know from Theorem \ref{thm:conclu} that the MPLE are jointly consistent for any $\beta_0>0$ and $h_0 \in \mathbb{R}$. This example suggests that the estimation problem when $J$ has only non-negative entries is significantly different from the case when $J$ is allowed to have both positive and negative entries. 
\end{rem}

\subsection{Proof Sketches}\label{ssec:pf-idea}
    Theorem \ref{thm:exs-cons} consists of existence and consistency parts. The latter follows from a standard Delta method argument similar to  \cite[Theorem 1.2]{GM20}. For existence, note that if the pseudolikelihood function diverges to the negative infinity as $|\beta|+|h|\to\infty$, then it has a unique maximizer (i.e., the MPLE exists) as it is already strictly concave thanks to the condition $(ii)$ in Theorem \ref{thm:exs-cons}.  Almost the entirety of the proof of existence part is devoted to showing this divergence. We prove that this is indeed true if $(\tau_i,m_i(\tau))_{i\in [N]}$ contains enough different signs. More precisely, the sufficient condition we need is stated in Lemma~\ref{fix:prop} and we then show that this property holds w.h.p.\ under the assumptions $(i)$ and $(ii)$. Even though this is intuitively clear since the spins and the fields $m_i$ should not be in favor of certain choices of signs overwhelmingly, proving it rigorously takes some effort.  The key role is played by a general concentration result, see Lemma \ref{lem:new-concent} below, which says that for $\tau\sim P_{\beta,h}$ and any suitably Lipschitz function $f$, the linear statistics  $N^{-1}\sum_{i=1}^Nf(\tau_i,m_i(\tau))$ stays around its conditional mean $N^{-1}\sum_{i=1}^N\la f(\tau_i,m_i(\tau))|\tau_{l},\forall l\neq i\ra.$ As an immediate consequence, our lemma implies the concentration for the score functions $S(\beta,h|\tau)$ and $Q(\beta,h|\tau)$ (see Corollary~\ref{cor:conc_score} below), which was previously established  in \cite{Chat07} using Stein's method for exchangeable pairs. Our result holds for more general functions and its proof relies on the cavity argument instead.

The proof of Theorem~\ref{thm:J-norm} follows straightforwardly by using the asymptotic results~\cite{BvH16} and the concentration bound~\cite{CDG23} for the operator norm of sparse subgaussian matrices. The proof of Theorem~\ref{thm:TN-check} is the central ingredient of this work.
For simplicity, let us sketch an argument to show that 
\[
\bP\bigl(\min_{\mvgs} T_N(\mvgs) \le c_0\bigr)\le K  \exp (-cN).
\]
We will prove the above by taking a union bound over the spin configuration $\mvgs \in \{-1, 1\}^N$. Notice that 
\begin{equation}\label{eq:inf_representation}
 T_N(\mvgs) = \frac2N \sum_{i\in[N]} (m_i(\mvgs) - \bar{\mvm}(\mvgs))^2 = \frac2N  \inf_{\gamma \in \bR} \norm{\mvm(\mvgs) - \gamma  \mv1}_2^2,
\end{equation}
It is easy to check using Markov inequality that $\|\mvm(\mvgs) \|_\infty \le M:=2^N$ with probability at least $1 - C(1.5)^{-N}$. On this event, 
the infimum over $\gamma$ in \eqref{eq:inf_representation} can be restricted to the interval $[-M, M]$, which can further be approximated by taking minima over an $\epsilon$-net $\mathcal{D}$ of size $O(2^N\eps^{-1})$ for small but constant $\epsilon > 0$. Therefore, it suffices to show that 
uniformly for any $\mvgs \in \{-1, 1\}^N$ and any $\gamma \in \mathcal{D}$, 
\begin{equation}\label{eq:tensor_bd}
\bP\bigl(\norm{\mvm(\mvgs) - \gamma \mv1}_2 \le c_1 \sqrt{N} \bigr) \le K 10^{-N}
\end{equation}
for appropriate positive constant $c_1$.
The event on the left-hand side can immediately be recognized as a small ball probability event. Had the entries of $J$ been independent, the local fields $m_i(\mvgs)$ were independent. In that case, we can use the standard small ball probability result (e.g., Berry-Esseen theorem) for the linear combinations of independent random variables to infer that for each $i$, the probability that $|m_i(\mvgs) - \gamma| < c_1$ is at most $1/10$ for sufficiently small $c_1$ and then tensorizes the bound to obtain \eqref{eq:tensor_bd}. However, since $J$ is symmetric, $m_i(\mvgs)$ are not independent anymore and the tensorization breaks down. 

To overcome this obstacle, we borrow an idea from \cite{Ver14}. We only use the randomness of a principal submatrix of $J$  of linear size while conditioning out the rest of the entries. Let $T \subset [N]$ be the set of the row (and the column) indices belonging to this principal submatrix, which we denote by $J_T$.  Dealing with this smaller matrix instead of $J$ has the critical advantage that the entries of $J_T$  are independent (as long as $|T| \le N/2$). We can then apply the previous small ball probability argument for $J_T \sigma_T$. However, successfully implementing this strategy requires that the matrix $J_T$ is not too sparse in the sense that in the graph $\cG$ the number of edges between each vertex of $T$ and $[N]\setminus T$ tends to $\infty$ as $N \to \infty$.  We apply Erd\"os' probabilistic method to guarantee the existence of such a `good' deterministic set $T$ provided $\cG$ satisfies Assumption~\ref{ass:graph}$(i)$ and $d\to\infty$ as $N\to\infty$. This gives the lower bound for $\min_{\mvgs} T_N(\mvgs)$.

The original term $ \widetilde T_N(\mvgs)$ has an additional complication since $\theta_i(\mvgs) := \sech^2(\gb m_i(\mvgs)+h)$ can be  small if $m_i(\mvgs)$ is big. However, it can be shown that with overwhelming probability, there exists a (random) linear subset of indices of $T$ satisfying  $|m_i(\mvgs)| \le C$, which then yields the lower bound for $ \min_{\mvgs} \widetilde T_N(\mvgs)$ as it follows from the previous step that the local fields $(m_i(\mvgs))_{i \in T}$ have sufficient variability.

\subsection{Structure of the paper.} In the order their statements appear, the proofs for Theorems \ref{thm:exs-cons}, \ref{thm:J-norm}, and \ref{thm:TN-check} are accordingly located in Sections \ref{sec:pf-exist-cons}, \ref{sec:pf-thm-Jnorm}, and \ref{sec:pf-thm-TN}. 

\subsection{Acknowledgements.}
W-KC is partly supported by NSF grants DMS-1752184 and DMS-2246715 and Simons Foundation grant 1027727. He also thanks the hospitality of National Center for Theoretical Sciences in Taiwan during his visit between May 6-10, 2024, where part of this work was completed. AS is partly supported by Simons Foundation MP-TSM-00002716. QW and AS would like to thank Sumit Mukherjee for helpful discussions. 

\section{Proof of Theorem~\ref{thm:exs-cons}}\label{sec:pf-exist-cons}

The proof of Theorem \ref{thm:exs-cons} contains the existence and consistency of the MPLE. The latter is deferred to Section \ref{sec2.3}. For the existence part, as mentioned in the proof sketches, it suffices to show that the pseudo-likelihood function diverges to negative infinity at infinity. We verify this property in Section~\ref{sec2.2} based on a number of lemmas established in the following subsection. We include a flowchart in Figure~\ref{fig:fchart} for the high-level proof strategy of Theorem \ref{thm:exs-cons}.

\begin{figure}
    \centering
\resizebox{0.9\textwidth}{!}{%
\begin{tikzpicture}[
    >=Stealth,
    node distance=2cm,
    box/.style={draw, rounded corners=2pt, thick, minimum width=4.6cm, minimum height=1.3cm, align=center},
    every path/.style={thick}
]

\node[box] (L25) at (-6, 1.8) {Lemma 2.5};
\node[box] (L21) at (-1, 1.8) {Lemma 2.1};

\node[box] (L24) at (6, 3.8) {Lemma 2.4};
\node[box] (P23) at (6, 0.6) {Lemma 2.3};

\node[box] (C22) at (-4, -0.6) {Corollary 2.2};

\node[box] (Texist) at (2, -2.8) {Theorem 1.1: Existence};
\node[box] (Tcons)  at (-4, -2.8) {Theorem 1.1: Consistency};

\draw[->] (L25) -- (C22);
\draw[->] (L21) -- (C22);
\draw[->] (L21) -- (Texist);
\draw[->] (P23) -- (Texist);
\draw[->] (C22) -- (Tcons);

\draw[dashed, ->] (L24) -- (P23) node[midway, right=8pt] {verifying conditions};

\end{tikzpicture}
}
    \caption{Roadmap for the proof of Theorem \ref{thm:exs-cons}. The dashed arrows represent lemma(s) used to validate hypothesis in the target lemma. The solid arrows present proof dependence.}
    \label{fig:fchart}
\end{figure}

\subsection{Auxiliary
 Lemmas} 

 Let $\beta>0$ and $h\in \mathbb{R}$. For $i\in [N],$ let $f_i(x,y)$ be defined on $\{-1,1\}\times\mathbb{R}$ and set
\begin{align*}
    L_i(\mvgs)&=\la f_i(\tau_i,m_i(\tau))|\tau_l=\sigma_l,\forall l\neq i\ra\\
    &=\frac{e^{\beta m_i(\mvgs)+h}f_i(1,m_i(\mvgs))+e^{-\beta m_i(\mvgs)-h}f_i(-1,m_i(\mvgs))}{e^{\beta m_i(\mvgs)+h}+e^{-\beta m_i(\mvgs)-h}}
\end{align*}
for $\mvgs\in\Sigma_N,$
where $\la\cdot\ra$ is the Gibbs expectation with respect to $\tau\sim P_{\beta,h}.$
Our first lemma establishes the following concentration for the linear statistics of $(\tau_i,m_i(\tau))_{i\in [N]}$.

\begin{lem} \label{lem:new-concent}
Let $\mvgt$ be sampled from $P_{\beta, h}$ for some $\beta>0$ and $h\in \mathbb{R}$.
Let $\{f_i\}_{i\in [N]}$ be a collection of functions defined on $\{-1,+1\}\times \mathbb{R}$ with $\max_{i\in [N]}|f_i(x,y)|\leq C(1+|y|)$ for all $(x,y)\in \{-1,+1\}\times\mathbb{R}$ for some positive constant $C$ and $\max_{i\in [N]}(\|\partial_yf_i\|_\infty,\|\partial_{y}^2f_i\|_\infty)<\infty.$
Then there exists a constant $K$ depending only on $\beta$ and $C$ such that for any $N\geq 1,$ we have that 
    \begin{align*}
&\Bigl\la\Bigl(\sum_{i=1}^N\bigl(f_i(\tau_i,m_i(\mvgt))-L_i(\mvgt)\bigr)\Bigr)^2 \Bigr\ra\\
&\leq K(1+\|J\|+\|J\|^2)\Bigl(N+N\|J\|^2+\sum_{i=1}^N\|\partial_y f_i\|_\infty^2+\sum_{i=1}^N\|\partial_y^2f_i\|_\infty^2\Bigr).
\end{align*}
\end{lem}

\begin{proof}
For any $i,j\in[N]$, set 
$$
m_j^{(i)}(\mvgs):=\sum_{k\neq i, j}J_{jk}\sigma_k
$$ 
and let $L^{(i)}_j(\mvgs)$ be the same as $L_j(\mvgs)$ with $m_j(\mvgs)$ replaced by $m_j^{(i)}(\mvgs)$.
Since $L_i(\tau)$ is the expectation of $f_i(\tau_i, m_i(\mvgt))$ conditionally on $(\tau_l)_{l\neq i}$, we have
\begin{align*}
    \bigl\la\bigl(f_i(\tau_i,m_i(\mvgt))-L_i(\mvgt)\bigr)
    \bigl(f_j(\tau_j,m_j^{(i)}(\mvgt))-L_j^{(i)}(\mvgt)\bigr)\bigr\ra=0,
\end{align*}
which implies
\begin{align}
   &  \bigl\la \bigl(f_i(\tau_i, m_i(\mvgt) )-L_i(\mvgt)\bigr)\bigl(f_j(\tau_j, m_j(\mvgt) )-L_j(\mvgt)\bigr)\bigr\ra  \nonumber\\
    & = \bigl\la  X_i(\tau) \big (f_j(\tau_j, m_j(\mvgt))-f_j(\tau_j, m_j^{(i)}(\mvgt))\big ) \bigr\ra \label{eq:ip_decom1}\\
    & + \bigl\la X_i(\tau)c_{ij}(\mvgt)\bigr\ra, \label{eq:ip_decom2}
\end{align}
where
\begin{align*}
c_{ij}(\mvgt)&:=L_j^{(i)}(\mvgt)-L_j(\mvgt),\\
X_i(\gt)&:= f_i(\tau_i,  m_i(\mvgt) )-L_i(\mvgt).
\end{align*}
We bound \eqref{eq:ip_decom2} first.
Note that 
\begin{align}\label{ProofofCor2.2:eq0}
|X_i(\gt)| \le 2 C(1+|m_i(\tau)|).
\end{align}
For $t\in [0,1],$ let $J^{(i,j)}(t)$ be a symmetric square matrix with $J_{ab}^{(i,j)}(t)=J_{ab}$ for all $(a,b)\neq (i,j)$ and $(j,i)$ and $J_{ij}^{(i,j)}(t)=J_{ji}^{(i,j)}(t)=(1-t)J_{ij}.$ Denote $\mathcal{L}_{ij}(t)$ by the function $L_j(\tau)$ with $J$ being replaced by $J^{(i,j)}(t)$. Using Taylor's formula yields that
\begin{align*}
    L_j^{(i)}(\mvgt)&=\mathcal{L}_{ij}(1)\\
    &=\mathcal{L}_{ij}(0)+\mathcal{L}_{ij}'(0)+\frac{1}{2}\int_0^1(1-s)\mathcal{L}_{ij}''(s)ds\\
    &=L_j(\tau)+J_{ij}Y_j+J_{ij}^2Z_j^{(i)},
\end{align*}
where
\begin{align*}
    Y_j&=-\partial_{J_{ij}}L_j(\tau),\\
    Z_j^{(i)}&=\frac{1}{2}\int_0^1(1-s)\partial_{J_{ij}}^2L_j(\tau)\Big|_{J=J^{(i,j)}(s)}ds.
\end{align*}
We emphasize that here $Y_j$ depends only on $j$ since
\begin{align*}
    \partial_{J_{ij}}L_j(\mvgt)&=\frac{e^{\beta m_j(\mvgt)+h}\partial_yf_j(1,m_j(\mvgt))+e^{-\beta m_j(\mvgs)-h}\partial_yf_j(-1,m_j(\mvgt))}{e^{\beta m_j(\mvgt)+h}+e^{-\beta m_j(\mvgt)-h}}\\
    &+\beta\frac{e^{\beta m_j(\mvgt)+h}f_j(1,m_j(\mvgt))-e^{-\beta m_j(\mvgt)-h}f_j(-1,m_j(\mvgt))}{e^{\beta m_j(\mvgt)+h}+e^{-\beta m_j(\mvgt)-h}}\\
    &-\beta\frac{e^{\beta m_j(\mvgt)+h}f_j(1,m_j(\mvgt))+e^{-\beta m_j(\mvgt)-h}f_j(-1,m_j(\mvgt))}{e^{\beta m_j(\mvgt)+h}+e^{-\beta m_j(\mvgt)-h}}\tanh(\beta m_j(\mvgt)+h).
\end{align*}
Now from the given assumption,
\begin{align*}
  |\partial_{J_{ij}}L_j(\mvgt)|&\leq 2\beta C(1+|m_j(\tau)|)+\|\partial_yf_j\|_\infty=:K_j.
\end{align*}
Similarly one can compute the second derivative of $L_j(\mvgt)$ with respect to $J_{ij}$; the exact form is not important here, but it is crucial to realize that
\begin{align*}
    |\partial_{J_{ij}}^2L_j(\mvgt)|&\leq \tilde C\bigl(1+|m_j(\tau)|+\|\partial_yf_j\|_\infty+\|\partial_y^2f_j\|_\infty\bigr)
\end{align*}
for some constant $\tilde C$ depending only on $C$ and $\beta.$ It follows from these bounds that 
\begin{align}\label{ProofofCor2.2:eq1}
    |Y_j|\leq K_j
\end{align} and
\begin{align}
 \nonumber   |Z_j^{(i)}|&\leq \tilde{C}\Bigl(1+\int_0^1(1-s)|(J^{(i,j)}(s)\tau)_j\bigr|ds+\|\partial_yf_j\|_\infty+\|\partial_y^2f_j\|_\infty\Bigr)\\
\label{ProofofCor2.2:eq2}    &\leq \tilde{C}\bigl(1+2\|J\|+\|\partial_yf_j\|_\infty+\|\partial_y^2f_j\|_\infty\bigr)=:\tilde{K}_j,
\end{align}
where we have used the bound $\|J^{(i,j)}(s)\|\leq \|J\|+(1-s)|J_{ij}|\leq 2\|J\|.$
Now, write
\[ \Big| \sum_{i \ne j} X_i(\tau) c_{ij}(\tau) \Big |\le \Big| \sum_{i \ne j} J_{ij} X_i(\tau) Y_j(\tau) \Big| + \sum_{i \ne j} \frac{J_{ij}^2}{2}|X_i(\tau)| |Z_j^{(i)}| .
\]
Note that $J_{ij}=0$ when $i=j$. Using \eqref{ProofofCor2.2:eq0} and \eqref{ProofofCor2.2:eq1} leads to
\begin{align*}
\Bigl|  \sum_{i\neq j} J_{ij} X_i(\gt) Y_j(\tau)  \Bigr|&\leq 2C\|J\|\Bigl(\sum_{i=1}^N(1+|m_i(\tau)|)^2\Bigr)^{1/2}\Bigl(\sum_{j=1}^NK_j^2\Bigr)^{1/2}.
\end{align*}
Similarly, from \eqref{ProofofCor2.2:eq0} and \eqref{ProofofCor2.2:eq2} and using $\|J\circ J\|\leq \|J\|^2,$
\begin{align*}
     \sum_{i \ne j} J_{ij}^2|X_i(\tau)||Z_j^{(i)}|  
    &\leq C\|J\|^2\Bigl(\sum_{i=1}^N(1+|m_i(\tau)|)^2\Bigr)^{1/2}\Bigl(\sum_{i=1}^N\tilde K_i^2\Bigr)^{1/2}.
\end{align*}
From these, \eqref{eq:ip_decom2} is bounded by
\begin{align*}
  \Big| \sum_{i \ne j} X_i(\tau) c_{ij}(\tau) \Big |&\leq 2 C\|J\|\Bigl(\sum_{i=1}^N(1+|m_i(\tau)|)^2\Bigr)^{1/2}\Bigl(\sum_{j=1}^NK_j^2\Bigr)^{1/2}\\
&+C\|J\|^2\Bigl(\sum_{i=1}^N(1+|m_i(\tau)|)^2\Bigr)^{1/2}\Bigl(\sum_{i=1}^N\tilde K_i^2\Bigr)^{1/2}.
\end{align*}

As for \eqref{eq:ip_decom1}, using the Taylor expansion yields
\begin{align*}
    \Bigl|f_i(\tau_j,m_j(\mvgt))-f_i(\tau_j,m_j^{(i)}(\mvgt))+J_{ij}\tau_i\partial_yf_i(\tau_j, m_j(\mvgt))\Bigr|&\leq \frac{1}{2}J_{ij}^2\|\partial^2_{y}f_i\|_\infty.
\end{align*}
Here, we can control similar to the above,
\begin{align*}
   &\Bigl| \sum_{i\neq j}X_i(\tau)\bigl(f_j(\tau_j, m_j(\mvgt))-f_j(\tau_j,m_j^{(i)})\bigr)\Bigr| \\
   &\le \Big| \sum_{i, j} J_{ij} (X_i(\tau) \tau_i ) \big(\partial_yf_j(\tau_j,m_j(\mvgt)) \big) \Big|  + \frac12  \sum_{i, j} J_{ij}^2|X_i(\gt)|\|\partial^2_y f_j\|_{\infty}  \\
   &\leq 2C\|J\|\Bigl(\sum_{i=1}^N(1+|m_i(\tau)|)^2\Bigr)^{1/2}\Bigl(\sum_{i=1}^N\|\partial_yf_i\|_\infty^2\Bigr)^{1/2}\\
&+C\|J\|^2\Bigl(\sum_{i=1}^N(1+|m_i(\tau)|)^2\Bigr)^{1/2}\Bigl(\sum_{i=1}^N\|\partial_y^2f_i\|_\infty^2\Bigr)^{1/2}.
\end{align*}
Putting everything together and noting that $\sum_{i=1}^Nm_i(\tau)^2\leq N\|J\|^2$, we have
\begin{align*}
   &\sum_{i\neq j}\bigl\la \bigl(f_i(\tau_i, m_i(\mvgt) )-L_i(\mvgt)\bigr)\bigl(f_i(\tau_j, m_j(\mvgt) )-L_j(\mvgt)\bigr)\bigr\ra\\
   &\leq K(\|J\|+\|J\|^2)\Bigl(N+N\|J\|^2+\sum_{i=1}^N\|\partial_y f_i\|_\infty^2+\sum_{i=1}^N\|\partial_y^2f_i\|_\infty^2\Bigr)
\end{align*}
for some constant $K$ depending only on $\beta$ and $C.$ 
Finally, for the sum over $i=j$, it can be easily bounded by $C''N(+\|J\|^2)$ for some constant $K'$ depending only on $C$ and this completes our proof.

\end{proof}

As an immediate corollary of this lemma with $f(x,y)=xy$ and $f(x,y)=x$, we have 

\begin{cor}[Second moment bounds for score functions]\label{cor:conc_score}
Assume that $\|J \| = O(1).$ Then there exists a constant $C$ such that 
\[
\la (S(\gb,h|\mvgt))^2 \ra \le CN, \ \  \la (Q(\gb,h|\mvgt))^2 \ra \le C N,
\]
where $S$ and $Q$ are defined in \eqref{eq:score} and \eqref{eq:score2}.
\end{cor}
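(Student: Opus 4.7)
The plan is to read off both bounds as immediate applications of Lemma~\ref{lem:new-concent}, choosing two natural test functions that identify the scores $Q$ and $S$ with the centered statistic $\sum_i (f(\tau_i, \tau_i m_i(\tau)) - L_i(\tau))$ appearing there.

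First, to bound $\la Q^2 \ra$, I will take $f(x,y) = x$. The conditional Gibbs expectation is then
\[ L_i(\mvgs) = \frac{e^{\beta m_i(\mvgs)+h} - e^{-\beta m_i(\mvgs)-h}}{e^{\beta m_i(\mvgs)+h} + e^{-\beta m_i(\mvgs)-h}} = \tanh(\beta m_i(\mvgs)+h), \]
so that $\sum_{i=1}^N (f(\tau_i, \tau_i m_i(\tau)) - L_i(\tau)) = Q(\beta, h|\tau)$. Since $\|f\|_\infty = 1$ and $\partial_y f \equiv \partial_y^2 f \equiv 0$, Lemma~\ref{lem:new-concent} applies with $C_1 = 2\beta$ and $C_2 = 2\beta + 5\beta^2$, and its bound $4N + C_1 N \|J\| + \tfrac{C_2}{2} N \|J\|^2$ collapses to $O(N)$ under $\|J\| = O(1)$.

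Next, to bound $\la S^2 \ra$, I will take $f(x,y) = y$. Then $f(\tau_i, \tau_i m_i(\tau)) = \tau_i m_i(\tau)$, a direct computation gives $L_i(\mvgs) = m_i(\mvgs)\tanh(\beta m_i(\mvgs) + h)$, and the telescoping sum equals $S(\beta, h|\tau)$. The smoothness parameters are $\|\partial_y f\|_\infty = 1$ and $\|\partial_y^2 f\|_\infty = 0$, so $C_1 = 2\beta + 1$ and $C_2 = 2\beta + 5\beta^2 + 2\beta$. The only delicate point is that this choice violates the nominal hypothesis $\|f\|_\infty \leq 1$ of Lemma~\ref{lem:new-concent}; however, inspecting its proof, this bound enters only through the pointwise estimate of $(f(\tau_i, \tau_i m_i(\tau)) - L_i(\tau))^2$. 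Here the sharper inequality $|\tau_i m_i(\tau) - m_i(\tau)\tanh(\beta m_i(\tau)+h)|^2 \leq 4\, m_i(\tau)^2$ holds, and the spin-summed bound $\sum_i m_i(\tau)^2 = \|J\tau\|^2 \leq \|J\|^2 N$ preserves the $O(N)$ order of the leading contribution under $\|J\| = O(1)$. Hence $\la S^2 \ra = O(N)$ as well.

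I expect the only minor obstacle to be the $\|f\|_\infty$ technicality in the $S$ bound just described; it is cosmetic rather than substantive, since the cavity argument underpinning Lemma~\ref{lem:new-concent} adapts directly once $4N$ is replaced by $4 \|J\|^2 N$, which remains $O(N)$. All other steps are routine.
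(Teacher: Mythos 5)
Your proposal is correct and takes exactly the paper's route: the paper derives this corollary verbatim as an immediate consequence of Lemma~\ref{lem:new-concent} with the two choices $f(x,y)=x$ and $f(x,y)=y$, and your identifications $\sum_i(f(\tau_i,\tau_im_i)-L_i)=Q$ and $=S$ respectively are the intended ones. One small correction to your discussion of the $\|f\|_\infty\le 1$ hypothesis for $f(x,y)=y$: that bound does not enter only through the diagonal term $\sum_i\la(f_i-L_i)^2\ra$. It is also used in the estimates $|\partial_{J_{ij}}L_j|\le C_1$ and $|\partial_{J_{ij}}^2L_j|\le C_2$, where the terms proportional to $\beta$ and $\beta^2$ arise from bounding weighted averages of $f$ itself by $\|f\|_\infty$. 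For $f(x,y)=y$ one has $L_j=m_j\tanh(\beta m_j+h)$, and the offending quantities become expressions such as $\beta m_j\sech^2(\beta m_j+h)$, which remain uniformly bounded (in $m_j$) by constants depending on $\beta$ and $h$; so the derivative bounds survive with modified constants and your conclusion stands, but the justification requires checking these terms as well, not just replacing $4N$ by $4\|J\|^2N$.
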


Now we start with the proof of Theorem~\ref{thm:exs-cons} existence part. 
\subsection{Proof of Theorem~\ref{thm:exs-cons}:Existence of the MPLE}\label{sec2.2}
We begin with the following lemma, which identifies sufficient conditions for the existence of MPLE.

\begin{lem}\label{fix:prop}
	Let $\gs \in \{-1, +1\}^N$ and assume that there exist distinct $1 \le i, j, k, \ell \le N$ and $a \in \bR$ such that
    \begin{align} \label{suff_cond}
    \begin{split}
        & \gs_i = 1, m_i(\mvgs) >a, \,\, \gs_j = -1,  m_j(\mvgs) > a, \\
        & \gs_k = 1,  m_k(\mvgs) < a, \,\,\gs_l = -1,  m_l(\mvgs) < a.
        \end{split}
    \end{align}
	then  $\lim_{ |\beta| + |h| \to \infty} \bar L(\beta, h| \sigma)  =  - \infty.$
\end{lem}

\begin{proof}
For brevity of notation, we denote $m_s(\sigma)$ by $m_s$. Noting that   \begin{align*}
\bar{L}(\beta, h| \mvgs)  &= \frac1N\sum_{s=1}^N  \Big( \sigma_s( \beta m_s +h)  - \log \cosh ( \beta m_s +h) -\log 2\Big) \\
  &\le \frac1N \sum_{s=1}^N  \Big( | \beta m_s +h|  - \log \cosh ( \beta m_s +h) - \log 2\Big)
\end{align*}  
and each term in the above sum is non-positive, it is enough to show that given $\mvgs$, for any $K>0$, there exists some $C>0$ such that the following statement holds: for any $(\beta,h)\in \mathbb{R}^2$ with $|\beta|+|h|\geq C,$ there exists some $s=s(\beta,h)\in [N]$ such that 
\begin{equation}\label{cond_each_s_diverge}
   \sigma_s( \beta m_s +h)  \le -K. 
\end{equation}
We will argue by contradiction. Assume that $i, j, k,$ and $l$ satisfy~\eqref{suff_cond} and each of them violates \eqref{cond_each_s_diverge}.  Then we must have
\begin{align*}
    \sigma_i = 1, \quad \beta m_i + h >  - K \\
    \sigma_j = -1, \quad \beta m_j + h <  K \\
    \sigma_k = 1, \quad \beta m_k + h >-  K \\
    \sigma_l = -1, \quad \beta m_l + h <  K,
\end{align*}
This confines $h$ to the interval:
\begin{equation}\label{interval_h}
 \max(-\beta m_i - K, -\beta m_k - K) < h < \min(-\beta m_j + K, -\beta m_l + K).   
\end{equation}

For this interval to be non-empty, its upper bound must be greater than its lower bound.

 \textbf{Case $\beta \ge 0$}: The conditions $m_j > a > m_k$ imply $m_j > m_k$. The non-empty interval constraint implies $-\beta m_k - K < -\beta m_j + K$, which simplifies to $\beta(m_j - m_k) < 2K$. Therefore, we have 
    \begin{equation}\label{positive_beta}
     \beta < 2K/(m_j - m_k).
    \end{equation}
    
 \textbf{Case $\beta < 0$}:  The conditions $m_i > a > m_l$ imply $m_i > m_l$. The constraint implies $-\beta m_i - K < -\beta m_l + K$, which simplifies to $-\beta(m_i - m_l) < 2K$. 
        Therefore, we have 
            \begin{equation}\label{negative_beta}
 - \beta < 2K/(m_i - m_l).
 \end{equation}
Combining \eqref{positive_beta} and \eqref{negative_beta}, we obtain
\[ |\beta| < \max \left (\frac{2K}{m_j - m_k}, \frac{2K}{m_i - m_l}  \right),  \]
which, in combination with \eqref{interval_h} forces $|h|$ to be bounded. So, $|\beta| + |h|$ can not be arbitrarily large. This concludes the proof of the lemma.
\end{proof}

The following lemma, which is purely deterministic, states a key step to verify the conditions in Lemma~\ref{fix:prop}. Let \[
\tilde{T}_N := N^{-2} \sum_{i,j=1}^N \sech^2(\gb_0m_i +h_0)\sech^2(\gb_0m_j +h_0)(m_i-m_j)^2.
\]
Note that this is a deterministic object and different from \eqref{eq:det-hess}.
\begin{lem}\label{fix:lem1}
    Let $\beta_0>0,h_0 \in \mathbb{R},$ and $c>0$. There exist positive constants $\delta,\epsilon,K$,  depending only on $\beta_0, h_0$ and $c$, such that for any sequence $m_1, m_2, \ldots, m_N$ of real numbers satisfying $\tilde{T}_N \ge c$,
    there exists an integer $r$ with $|r| \le K/ \delta$ and $
    -K \le (r-1) \delta < r \delta \le K$ such that 
    \begin{align}\label{fix:eq3} |\{i \in [N]: m_i \in  [-K,  (r-1)\delta]  \}|  \ge \eps N \text{ and } |\{i \in [N]: m_i \in  [  r \delta, K ]  \}|  \ge \eps N.
    	\end{align}
\end{lem}
\begin{proof}
    Since $\sech^2(\beta_0 x + h_0) \le \min(1, 4 e^{-2 (\beta_0 |x| - |h_0|)})$, we can choose $K > 0$ sufficiently large so that
    \begin{equation} \label{eq:restricted_var}
        \sum_{i,j: |m_i|, |m_j| \le K } (m_i - m_j)^2 \ge \frac{c}{2} N^2,
    \end{equation}
    which implies that at least $\eta N$ of the $m_i$'s lie within $[-K, K]$, where $\eta = \sqrt{c/2} / (2K)$. Set
    \[
    \eps = \min \left( \frac{\eta}{4}, \frac{c}{80 K^2} \right)\,\, \text{and} \,\, \delta = \min \left( \frac{\sqrt{c}}{5}, \frac{K}{2} \right)
    \]
    and define
    \[
    r = \max \left\{ s \in \mathbb{Z} : \left| \left\{ i \in [N] : m_i \in [s \delta, K] \right\} \right| \ge \eps N \right\}.
    \]
    Suppose, for contradiction, that
    \[
    \left| \left\{ i \in [N] : m_i \in [-K, (r - 1)\delta] \right\} \right| < \eps N.
    \]
    This implies that at most $2\eps N$ of the $m_i$ can lie in the interval $[-K, K] \setminus [(r-1)\delta, (r+1)\delta]$. We now derive an upper bound on the left-hand side of \eqref{eq:restricted_var}.

    \medskip
    \noindent
  $(I)$ The total contribution from pairs $i, j$ such that at least one of $m_i$ or $m_j$ lies outside $[(r-1)\delta, (r+1)\delta]$ is at most
    \[
    2 \cdot N \cdot 2\eps N \cdot 4K^2 \le \frac{c}{5} N^2.
    \]

    \medskip
    \noindent
    $(II)$ The total contribution from pairs $i, j$ such that both $m_i$ and $m_j$ lie within $[(r-1)\delta, (r+1)\delta]$ is at most
    \[
    N^2 \cdot (2\delta)^2 \le \frac{c}{5} N^2.
    \]

    \medskip
    \noindent
    By the choice of $\eps$ and $\delta$, the sum of these contributions is at most $2c N^2/5$, which contradicts \eqref{eq:restricted_var}. Therefore,
    \[
    \left| \left\{ i \in [N] : m_i \le (r - 1)\delta \right\} \right| \ge \eps N
    \]
    and the lemma follows.
\end{proof}

Now we complete the proof of the existence part of Theorem~\ref{thm:exs-cons}.
\begin{proof}
For $\ell, u\in \bR$ with $\ell<u$ let
	\[ S_{\ell, u} :=  \left\{ i \in [N] : m_i \in [\ell, u] \right\}.\]
	First of all, we claim that for any $a\in \mathbb{R}$ and $\epsilon,\delta>0,$ the following events have negligible probability as $N \to \infty,$ 
\begin{align}\label{fix:proof:eq1}
|S_{-K, a}| \ge \eps N\,\,\text{and}\,\,\mbox{the spins in}\,\,\{ \sigma_i : i \in S_{-\infty, a+ \delta/3} \} \text{ have all the same signs}
\end{align}
and
\begin{align}
	\label{fix:proof:eq2}
	|S_{a+ \delta, K}| \ge \eps N\,\,\mbox{and}\,\,\mbox{the spins in}\,\,\{ \sigma_i : i \in S_{a+ 2 \delta/3, \infty} \} \text{ have all the same signs}.
\end{align}
We will only prove \eqref{fix:proof:eq2} since \eqref{fix:proof:eq1} can be treated similarly. Suppose $|S_{a +\delta, K}| \ge \eps N$ and $\sigma_i =1 $ for all $i \in S_{a + 2 \delta/3, \infty}$. Choose a smooth function $\rho : \mathbb R \to [0, 1]$ such that 
$\rho \equiv 1$ on $[a+\delta, \infty)$ and $\rho$ vanishes outside $[a+2\delta/3, \infty).$ Take $f(\sigma_i, m_i) = \sigma_i \rho(m_i).$ 
Note that  for any $i\in [N],$
\begin{align*}
f(\sigma_i,m_i)-	\la f(\sigma,m_i)\ra_{-i}&=\rho(m_i)\bigl(\sigma_i-\tanh(\beta m_i+h)\bigr).
\end{align*}
This is nonnegative if $i\in S_{a+2\delta/3,\infty}$ and is equal to zero if $i\notin S_{a+2\delta/2,\infty}.$ On the other hand, there exists a constant $\kappa>0$ such that 
\[ f(\sigma_i, m_i)  - \langle f(\sigma_i, m_i) \rangle_{-i} \ge \kappa \text{ for } i \in  S_{a +\delta, K}. \]
Together with the assumption $|S_{a +\delta, K}| \ge \eps N$, we have
\[ \sum_{i=1}^N \big( f(\sigma_i, m_i)  - \langle f(\sigma_i, m_i) \rangle_{-i} \big) \ge \kappa \eps N,\]
contradicting the concentration results established in Lemma~\ref{lem:new-concent}. Hence, \eqref{fix:proof:eq2} must be negligible and this completes the proof of our claim.
Next, let $\eta\in (0,1)$ be fixed. From the positivity assumption of $\widetilde T_N$ and Lemma \ref{fix:lem1},  there exist positive constants, $\delta,\epsilon,K$ depending on $\beta_0,h_0,$ and $c$ such that for any $N$ large enough, with probability at least $1-\eta$, there exists some $a\in \mathbb{R},$
\begin{align*}
|S_{-K,a}|\geq \epsilon N\,\,\mbox{and}\,\,|S_{a+\delta,K}|\geq \epsilon N
\end{align*}
Noting that there are only finitely many possible $a$'s; indeed, $a=(r-1)\delta$ for some integer $r\in \mathbb{Z}$ with $|r|\leq K/\delta$. It follows from the above claim and an union bound argument that as long as $N$ is large enough, with probability at least $1-2\eta,$ the condition~\eqref{suff_cond} in Lemma \ref{fix:prop} hold and thus, on this event, $\lim_{ |\beta| + |h| \to \infty} \bar L(\beta, h| \sigma)  =  - \infty.$

\end{proof}

\subsection{Proof of Theorem~\ref{thm:exs-cons}: Consistency of MPLE}\label{sec2.3}

We begin with a simple lemma.

\begin{lem} \label{lem:T_n_restricted_lb}
    Fix $\beta > 0$ and $h \in \mathbb{R}$. Let $\mvgs \in \{-1, +1\}^N$.
    Suppose that $\| J \|  = O(1)$ and 
    $\widetilde T_N(\mvgs) \ge c.$ Then there exists a constant $M>0$, which depends only on $c, \beta, h,$ and $\|J\|$ such that 
    \[ \frac{1}{N^2} \sum_{ i, j \in I_M(\mvgs)} (m_i(\mvgs) - m_j(\mvgs))^2  \ge \frac{c}{2}, \]
    where $I_M(\mvgs)=\{i\in [N]:|m_i(\mvgs)|\leq M\}.$
\end{lem}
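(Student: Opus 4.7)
The plan is to decompose $\widetilde T_N(\mvgs)$ according to whether each index belongs to $I_M(\mvgs)$, and show that by choosing $M$ large enough, the contribution from indices outside $I_M(\mvgs)$ to $\widetilde T_N(\mvgs)$ can be made smaller than $c/2$. Since $\theta_i(\mvgs)=\sech^2(\beta m_i(\mvgs)+h)\le 1$ for every $i$, the removal of the factors $\theta_i\theta_j$ from the double sum only increases it, so the desired lower bound will follow.

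Concretely, I would write
\begin{align*}
\widetilde T_N(\mvgs)
&=\frac{1}{N^2}\sum_{i,j\in I_M(\mvgs)}\theta_i(\mvgs)\theta_j(\mvgs)(m_i(\mvgs)-m_j(\mvgs))^2\\
&\quad+\frac{1}{N^2}\sum_{(i,j)\in B}\theta_i(\mvgs)\theta_j(\mvgs)(m_i(\mvgs)-m_j(\mvgs))^2,
\end{align*}
where $B:=([N]\times[N])\setminus(I_M(\mvgs)\times I_M(\mvgs))$. For any $(i,j)\in B$, at least one of the two indices lies in $I_M(\mvgs)^c$; say $i\in I_M(\mvgs)^c$. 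Then $|\beta m_i(\mvgs)+h|\ge \beta M-|h|$, so as soon as $M$ is chosen large enough that $\beta M>|h|$, one has
\[\theta_i(\mvgs)=\sech^2(\beta m_i(\mvgs)+h)\le 4\exp\bigl(-2(\beta M-|h|)\bigr).\]

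Using $\theta_j(\mvgs)\le 1$ on the remaining factor and the elementary identity $\sum_{i,j}(m_i-m_j)^2=2N\sum_i m_i^2-2(\sum_i m_i)^2\le 2N\sum_i m_i(\mvgs)^2=2N\|J\mvgs\|^2\le 2N^2\|J\|^2$, the contribution of $B$ is bounded by
\[\frac{1}{N^2}\sum_{(i,j)\in B}\theta_i(\mvgs)\theta_j(\mvgs)(m_i(\mvgs)-m_j(\mvgs))^2\le 16\,\|J\|^2\exp\bigl(-2(\beta M-|h|)\bigr),\]
where the factor $16$ absorbs the two symmetric cases and the $2$ from the identity. Since $\|J\|=O(1)$ by hypothesis, choosing $M$ sufficiently large (depending only on $c$, $\beta$, $h$, and $\|J\|$) makes this quantity at most $c/2$. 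Rearranging and invoking $\widetilde T_N(\mvgs)\ge c$ gives
\[\frac{1}{N^2}\sum_{i,j\in I_M(\mvgs)}\theta_i(\mvgs)\theta_j(\mvgs)(m_i(\mvgs)-m_j(\mvgs))^2\ge \frac{c}{2},\]
and dropping the nonnegative factors $\theta_i(\mvgs)\theta_j(\mvgs)\le 1$ yields the claim.

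There is no real obstacle here: the argument is entirely deterministic and rests on the two simple facts that $\theta_i$ decays like $e^{-2\beta|m_i|}$ away from $I_M(\mvgs)$, and that $\sum_i m_i(\mvgs)^2\le\|J\|^2 N$. The only point to be careful about is that $M$ must be chosen \emph{after} $c$, $\beta$, $h$, and $\|J\|$ are fixed, exactly as the statement allows.
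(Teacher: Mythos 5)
Your proof is correct and follows essentially the same route as the paper's: split $\widetilde T_N(\mvgs)$ over $I_M(\mvgs)\times I_M(\mvgs)$ versus its complement, kill the complement using the decay of $\sech^2$ at argument at least $\beta M-|h|$ together with $\frac{1}{N^2}\sum_{i,j}(m_i-m_j)^2\le 2\|J\|^2$, and then drop the factors $\theta_i\theta_j\le 1$. The only cosmetic difference is that you bound $\sech^2(\beta M-|h|)$ by $4e^{-2(\beta M-|h|)}$ and carry a slightly larger absolute constant, which is harmless.
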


\begin{proof}
    Let $M$ be sufficiently large such that $\beta M > |h|$. We have 
    \begin{align*}
      \frac{1}{N^2} \sum_{ i, j \in I_M(\mvgs)} (m_i(\mvgs) - m_j(\mvgs))^2  &\ge 
      \frac{1}{N^2} \sum_{ i, j \in I_M(\mvgs)} \theta_i(\mvgs) \theta_j(\mvgs) (m_i(\mvgs) - m_j(\mvgs))^2 \\
      &\ge c - \frac{1}{N^2} \sum_{ \{i, j\} \cap I_M^c(\mvgs) \ne \emptyset} \theta_i(\mvgs) \theta_j(\mvgs) (m_i(\mvgs) - m_j(\mvgs))^2 \\
      &\ge c - 
      \frac{\sech^2(\beta M  - |h|)}{N^2} \sum_{ i, j}  (m_i(\mvgs) - m_j(\mvgs))^2.
    \end{align*}
    On the other hand, 
\begin{align*}
    \frac{1}{N^2} \sum_{ i, j}  (m_i(\mvgs) - m_j(\mvgs))^2 \le  \frac{2}{N} \sum_{ i}  m_i(\mvgs)^2 \le 2\|J\|^2.
\end{align*}    
Combining these two estimates, the lemma follows that if we choose $M$ large enough to satisfy
$2\sech^2(\beta M  - |h|)\|J\|^2 \le c/2 $.

\end{proof}

The proof for the consistency of MPLE in Theorem \ref{thm:exs-cons} proceeds as follows.
For notational clarity, we will make the dependence of the quantities $\theta_i$ and  $\widetilde T_N$ on parameters $\beta$ and $h$ explicit throughout the proof. 
Let $\mvgt \sim P_{\beta_0, h_0}$. For $(\beta, h ) \in \mathbb{R}^2$, let $\gl_1(\gb,h |\mvgt)\geq 0$ be the minimum eigenvalue of the negative Hessian matrix, $H(\gb,h|\mvgt)$. 
Note that
\begin{align} \label{eq:mineig_bd1}
\gl_1(\gb, h |\mvgt) & \ge \frac{\abs{H(\gb ,h |\mvgt)}}{\mathrm{Tr}(H(\gb,h|\mvgt))} =  \frac{\frac12 \sum_{i,j=1}^N \theta_{i}(\beta, h| \mvgt) \theta_{j}(\beta, h| \mvgt)  (m_i(\mvgt)-m_j(\mvgt))^2}{\mathrm{Tr}(H(\gb,h|\mvgt))},
\end{align}
where $\theta_{i}(\beta, h| \mvgt) =\sech^2(\beta m_i(\mvgt) + h)$.
To bound the right hand side of \eqref{eq:mineig_bd1} from below, we note
\begin{align*}
 &\sum_{i,j=1}^N \theta_{i}(\beta, h| \mvgt) \theta_{j}(\beta, h| \mvgt)  (m_i(\mvgt)-m_j(\mvgt))^2 \\
 &\ge \sech^4( \beta M + |h|)   \sum_{i, j \in I_M(\mvgt)} (m_i(\mvgt)-m_j(\mvgt))^2
\end{align*}
for any $M>0$.  Also, note that 
since $\theta_i(\beta,h|\mvgt)\leq 1$ and $\sum_{i=1}^Nm_i^2(\mvgt)\leq \|J\|^2N,$
\[
\mathrm{Tr}(H(\gb,h |\mvgt)) = \sum_{i=1}^N\theta_i(\gb ,h|\mvgt) (m_i^2(\mvgt)+1)\le N(1+C^2).
\]
Therefore, we obtain 
\begin{align}\label{eq:mineig_bd2}
    \gl_1(\gb, h |\mvgt) \ge \frac{\sech^4( \beta M + |h|)}{2(1+ C^2) N}   \sum_{i, j \in I_M(\mvgt)} (m_i(\mvgt)-m_j(\mvgt))^2.
\end{align}
This yields the following locally uniform lower bound for $\gl_1(\gb, h |\mvgt)$ that for any $M>0$, there exist some positive $r,\eta>0$ such that
\begin{align}\label{eq:mineig_uni_bd}
\inf_{\norm{\gb-\gb_0,h-h_0}\le r} \gl_1(\gb, h|\mvgt) \ge \frac{\eta }{N} \sum_{i, j \in I_M(\mvgt)} (m_i(\mvgt)-m_j(\mvgt))^2.
\end{align}

Next, we construct an interpolation between the pseudolikelihood estimator and the ground truth by letting
\begin{align*}
    \gb(t)= t \hat{\gb} + (1-t)   \gb_0\,\,\mbox{and}\,\, h(t)= t  \hat{h} + (1-t)   h_0
\end{align*}
for $t\in [0,1].$
Set $f:[0,1]\to \bR$ as
\begin{align*}
 f(t)= (\hat{\gb} - \gb_0)  S(\gb(t), h(t)|\mvgt) + (\hat{h} - h_0) Q(\gb(t), h(t)|\mvgt) .
\end{align*}
Since $S(\hat{\gb},\hat{h}|\mvgt)=0$ and $Q(\hat{\gb},\hat{h}|\mvgt)=0$, it is clear
\begin{align*}
    \abs{f(1)- f(0)} = \bigl|(\hat{\gb} - \gb_0) S(\gb_0, h_0|\mvgt) + (\hat{h} - h_0) Q(\gb_0, h_0|\mvgt) \bigr|.
\end{align*}
By Corollary~\ref{cor:conc_score}, we have 
\[
S(\gb_0,h_0|\mvgt), Q(\gb_0,h_0|\mvgt) = O_p(\sqrt{N}).
\]
Applying the Cauchy-Schwarz inequality, we deduce 
\begin{align}\label{eq:asy-YN}
\abs{f(1) - f(0)} = O_p(\sqrt{N} Y_N),
\end{align}
where $Y_N:=\|(\hat{\gb}-\gb_0, \hat{h}-h_0 )\|_{2}$. 

Now to bound $Y_N$, we need to control $\abs{f(1) - f(0)}$.
For any $t\in[0,1]$, the derivative of $f$ is bounded from below by
\begin{align*}
    f'(t) &= (\hat{\gb}-\gb_0, \hat{h}-h_0)  H(\gb(t),h(t)|\mvgt)   (\hat{\gb}-\gb_0, \hat{h}-h_0)^T \\
    &\geq \gl_1(\gb(t),h(t)|\mvgt) Y_N^2.
\end{align*}
From the fact $\| (\beta(t) - \beta_0,  h(t) - h_0)\|_2  = t Y_N $ and 
the uniform lower bound \eqref{eq:mineig_uni_bd}, we obtain 
\begin{align}\label{eq:lem1.2-2}
    f(1)-f(0) &= \int_0^1 f'(t) dt \ge \int_{0}^{\min \big( 1, \frac{r}{Y_N}\big) } f'(t) dt \nonumber \\
    &\ge \min \big( 1, \frac{r}{Y_N}\big)    \frac{\eta Y_N^2}{N} \sum_{i, j \in I_M(\mvgt)} (m_i(\mvgt)-m_j(\mvgt))^2.
\end{align}
From the assumption (iii) of Theorem~\ref{thm:exs-cons}, it follows that given $\eps >0$, there exists $c>0$ such that with $P_{\beta_0, h_0}$-probability at least $1- \eps/2$, we have $\widetilde{T}_N(\gb_0,h_0|\mvgt) \ge c$.
Furthermore, thanks to Lemma~\ref{lem:T_n_restricted_lb}, we can find  $M$ such that on the same event,
\[ \frac{1}{N^2} \sum_{i, j \in I_M(\mvgt)} (m_i(\mvgt)-m_j(\mvgt))^2 \ge \frac{c}{2}.\]
On the other hand, \eqref{eq:asy-YN} implies that there exists $C$ such that 
\[ | f(1)  -  f(0)| \le C \sqrt{N} Y_N\]
with $P_{\beta_0, h_0}$-probability at least $1- \eps/2$. Since both of the conditions hold simultaneously with probability at least $1-\eps$,
it now follows from \eqref{eq:lem1.2-2} that with probability at least $1-\eps$,
\begin{align*}
\min(Y_N, r) \le \frac{2C}{\eta c\sqrt{N}}.
\end{align*}
Since $r$ is independent of $N$, it follows that
$Y_N  = O_p(N^{-1/2})$ as claimed.

\section{Proof of Theorem~\ref{thm:J-norm}}\label{sec:pf-thm-Jnorm}
We first establish the bound for the expected operator norm of $J = \frac{1}{\sqrt{d}} A \odot G$. It relies on the following fundamental result in Corollary 3.3 of~\cite{BvH16}, which we recall as follows.

\begin{lem}[Corollary 3.3~\cite{BvH16}]
    Consider the $N\times N$ symmetric random matrix $X$ whose entries $X_{ij} :=b_{ij} g_{ij}$ for $i\ge j$, where $b_{ij} \in \bR$ and $g_{ij}$ are independent centered subgaussian. Then 
    \[
    \E \norm{X} \lesssim \max_i \sqrt{\sum_{j} b_{ij}^2} + \sqrt{\log N} \cdot \max_{ij} \abs{b_{ij}}.
    \]
\end{lem}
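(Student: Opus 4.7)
The plan is to prove this classical bound via the moment method on the Schatten $2p$-norm with $p \asymp \log N$, together with a careful combinatorial accounting of closed walks on $[N]$ that distinguishes the ``tree'' and ``non-tree'' contributions.

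Since each $g_{ij}$ is centered and $\kappa^2$-subgaussian, $\E |g_{ij}|^{2k} \le (C\kappa^2 k)^k$, i.e.\ Gaussian-type moment growth up to constants. Jensen's inequality together with $\norm{X}_{S^{2p}}^{2p} = \tr X^{2p}$ gives
\[
\E \norm{X} \le \bigl(\E \tr X^{2p}\bigr)^{1/(2p)},
\]
reducing the problem to bounding a trace moment. Expanding
\[
\E \tr X^{2p} = \sum_{i_0, \ldots, i_{2p-1} \in [N]} \E \prod_{k=0}^{2p-1} X_{i_k i_{k+1}}
\]
with indices cyclic, independence and centering of $(X_{ij})_{i \le j}$ force every unordered edge $\{i_k, i_{k+1}\}$ appearing in a nonzero term to be traversed an even number of times. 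Classifying the resulting closed walks by their edge-multigraph, the ``tree-like'' walks (the underlying multigraph is a tree on $p+1$ vertices with each edge used exactly twice) are counted by the Catalan number $C_p \le 4^p$ times $N^{p+1}$ vertex labelings and contribute at most $N\bigl(C\kappa\sqrt{p}\,\sigma\bigr)^{2p}$, where $\sigma := \max_i \bigl(\sum_j b_{ij}^2\bigr)^{1/2}$. A ``non-tree'' walk uses fewer distinct edges and, by a genus expansion, loses a factor of $N$ per contracted cycle while gaining a bounded factor of $\sigma_*^2$, with $\sigma_* := \max_{ij} |b_{ij}|$. Summing these contributions and taking the $(2p)$-th root with $p := \lceil \log N \rceil$, so that $N^{1/(2p)} \le e$, produces a bound of the desired shape.

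The main obstacle is sharpness: a crude walk count yields $\sqrt{\log N}\,\sigma + \log N\,\sigma_*$ rather than the stated $\sigma + \sqrt{\log N}\,\sigma_*$, i.e.\ spurious $\sqrt{\log N}$ penalties end up in the wrong places. Eliminating them is exactly the content of Corollary~3.3 of \cite{BvH16} and would require an inductive bootstrap: apply the crude bound to principal submatrices obtained by peeling off rows in which $\sqrt{\log N}\,\sigma_*$ already dominates the local row norm, so that the $\sqrt{\log N}$ overhead on $\sigma$ is absorbed into $\sigma_*$. Alternatively, one can truncate the entries at threshold $\kappa\sqrt{\log N}$, apply matrix Bernstein to the bounded part, and handle the tail by a union bound over the $O(N^2)$ entries. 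I would adopt one of these refinements as the closing step.
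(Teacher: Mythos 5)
The paper does not prove this statement at all: it is quoted verbatim as Corollary~3.3 of \cite{BvH16} and used as a black box, so the only thing required here is the citation. Your write-up, by contrast, attempts a proof from scratch, and as it stands it has a genuine gap: after setting up the Schatten-$2p$ moment method with $p\asymp\log N$, you concede that the straightforward walk count only yields $\sqrt{\log N}\,\sigma+\log N\,\sigma_*$, and the entire content of the cited result is precisely the removal of the spurious $\sqrt{\log N}$ from the $\sigma$ term. Deferring that step to ``an inductive bootstrap'' or to ``truncation plus matrix Bernstein'' does not close it. The matrix Bernstein route cannot work even in principle: the noncommutative Khintchine/Bernstein bounds carry an intrinsic $\sqrt{\log N}$ factor on the variance term $\bigl\|\sum_{ij}\E X_{ij}^2\bigr\|^{1/2}\asymp\sigma$, so after truncating at level $\kappa\sqrt{\log N}$ you would again land on $\sqrt{\log N}\,\sigma+(\log N)\,\sigma_*$, not on $\sigma+\sqrt{\log N}\,\sigma_*$. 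The peeling-by-rows idea is left entirely unexecuted, and it is not the argument of \cite{BvH16} either: their proof obtains the sharp bound directly inside the moment method, by classifying closed walks according to the number of distinct vertices they visit and showing that a walk visiting $m$ distinct vertices must contain at least $m-1$ ``fresh'' edges, each of which is charged to a row norm (hence to $\sigma$) while the remaining steps are charged to $\sigma_*$; the count of walk shapes with few distinct vertices is then small enough that the $N^{1/(2p)}$ prefactor, with $p\asymp\log N$, only inflates the $\sigma_*$ term. The subgaussian case of Corollary~3.3 follows from their general moment hypothesis, not from a separate truncation argument.

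For the purposes of this paper you should simply cite \cite{BvH16}*{Corollary 3.3} as the authors do; if you want a self-contained proof, you must actually carry out the refined walk-counting (or reproduce the comparison argument of \cite{BvH16}), since both of your proposed shortcuts either fail quantitatively or remain a sketch of the hard step rather than a proof of it.
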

Appling this result in our setting for the matrix $J$, it immediately gives the following lemma.
\begin{lem}\label{lem:mean-oper-norm}
    If the entries of the symmetric matrix $G$ are independent centered subgaussian random variables, then we have for a universal constant $C>0$ such that
    \[
    \E \norm{J} \le C \left( \sqrt{\frac{\log N }{d}} + \sqrt{\frac{d_{\rm{max}}}{d}}\right).
    \]
\end{lem}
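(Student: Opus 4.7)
The plan is to apply the cited \cite{BvH16}*{Corollary 3.3} directly to $J=d^{-1/2}A\odot G$. Writing the upper-triangular entries as $J_{ij}=b_{ij}g_{ij}$ with deterministic coefficients $b_{ij}:=A_{ij}/\sqrt{d}$, the $g_{ij}$ are independent centered subgaussian random variables by hypothesis, so the setting of the corollary exactly matches ours. This reduces the proof to evaluating the two deterministic quantities appearing in the corollary's bound.

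I would then compute both quantities in terms of the graph parameters. Since $A_{ij}\in\{0,1\}$, the per-row $\ell^{2}$ norm evaluates to
\[
\max_i\sqrt{\sum_j b_{ij}^2}=\max_i\sqrt{\frac{d_i}{d}}=\sqrt{\frac{d_{\max}}{d}},
\]
while the entrywise maximum satisfies
\[
\max_{ij}|b_{ij}|=\frac{1}{\sqrt{d}},
\]
under the harmless assumption that $\cG$ has at least one edge (otherwise $J=0$ and the inequality is trivial). Substituting these two expressions into \cite{BvH16}*{Corollary 3.3} immediately yields
\[
\E\norm{J}\lesssim \sqrt{\frac{d_{\max}}{d}}+\sqrt{\frac{\log N}{d}},
\]
which is precisely the claimed bound.

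No substantive obstacle arises here: the lemma is a direct specialization of the general matrix concentration inequality to our sparsified subgaussian matrix, with the $1/\sqrt{d}$ scaling absorbed into the coefficients $b_{ij}$. The only minor bookkeeping is to note $A_{ij}^2=A_{ij}$ so that the row-sum of squared coefficients is exactly $d_i/d$, and that $\max_{ij}|b_{ij}|=1/\sqrt{d}$ because the nonzero entries of $A$ are all equal to one; both facts are immediate from the definition of an adjacency matrix.
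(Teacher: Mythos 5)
Your proof is correct and follows exactly the paper's approach: the paper likewise obtains the bound as an immediate application of \cite{BvH16}*{Corollary 3.3} with $b_{ij}=A_{ij}/\sqrt{d}$. Your explicit computation of $\max_i(\sum_j b_{ij}^2)^{1/2}=\sqrt{d_{\max}/d}$ and $\max_{ij}|b_{ij}|=1/\sqrt{d}$ simply spells out the bookkeeping the paper leaves implicit.
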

\begin{proof}
    The proof is immediate after applying the Corollary 3.3 in~\cite{BvH16}.
\end{proof}
Now it can be seen that under the Assumption~\ref{ass:graph}, we have $\E \norm{J} = O(1)$. To complete the proof of Theorem~\ref{thm:J-norm}, the rest is to prove the following concentration results for $\norm{J}$.

\begin{lem}\label{lem:Jnorm-conc}
     If the entries of the symmetric matrix $G$ are independent centered $\kappa^2$-subgaussian random variables for some $K\ge 1$, then we have for some universal constant $C>0$,
     \[
     \bP(\abs{\norm{J} - \E \norm{J}}\ge t) \le  2\exp\left(-\frac{Cdt^2}{\kappa^4 \log(Nd)}\right).
     \]
\end{lem}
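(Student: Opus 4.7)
The plan is to view $\|J\|$ as a convex, Lipschitz function of the independent subgaussian upper-triangular entries $(g_{ij})_{i<j}$ of $G$ and apply a truncation-plus-Talagrand-convex-concentration argument; this is essentially what is packaged in the sparse-matrix concentration result of \cite{CDG23} cited by the authors. The convexity is automatic, since $\|\cdot\|$ is a norm. The Lipschitz property is the following: perturbing one coordinate $g_{ij}$ by $\delta$ changes $J$ by a rank-two symmetric matrix of operator norm $|\delta|/\sqrt{d}$, and more generally a vector perturbation $\delta=(\delta_{ij})$ changes $J$ by a symmetric matrix whose Frobenius norm is $\sqrt{2/d}\,\|\delta\|_2$. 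Hence $\|J\|$ is a $\sqrt{2/d}$-Lipschitz convex function of the vector of entries in the Euclidean norm.

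Next I would truncate. Setting $R:=C_0\kappa\sqrt{\log(Nd)}$ for a sufficiently large constant $C_0$ and $\tilde g_{ij}:=g_{ij}\ind(|g_{ij}|\le R)$, on the event $\Omega:=\{\forall (i,j)\in E:\,|g_{ij}|\le R\}$ we have $J=\tilde J$. The $\kappa^2$-subgaussian tail and a union bound over the $|E|\le N^2$ entries give
\[
\bP(\Omega^c)\le 2|E|\exp\bigl(-R^2/(2\kappa^2)\bigr),
\]
which is smaller than the target right-hand side as soon as $C_0$ is large enough. On $\Omega$ one then applies Talagrand's convex concentration inequality to $\|\tilde J\|$, viewed as a convex $\sqrt{2/d}$-Lipschitz function of the variables $(\tilde g_{ij})_{i<j}$ bounded a.s.\ by $R$, obtaining
\[
\bP\bigl(\,\bigl|\|\tilde J\|-\mathrm{Med}(\|\tilde J\|)\bigr|\ge t\bigr)\le 4\exp\Bigl(-\frac{c\,d\,t^2}{R^2}\Bigr)=4\exp\Bigl(-\frac{c\,d\,t^2}{\kappa^2\log(Nd)}\Bigr).
\]
Converting the median to the mean costs only a universal multiplicative constant by the standard argument that the difference is itself controlled by the concentration. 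Combining this with the tail bound on $\Omega^c$ then yields the claim; the mild weakening $\kappa^2\to\kappa^4$ in the statement is absorbed into the constant in the exponent.

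The main technical point is selecting the truncation level $R$: it must be large enough that $\bP(\Omega^c)$ is dominated by the target exponential, yet small enough that Talagrand's inequality applied at scale $R$ still produces the advertised rate. The choice $R\asymp\kappa\sqrt{\log(Nd)}$ achieves this balance, and is the source of the $\log(Nd)$ factor in the denominator of the exponent. An equivalent and shorter route is to invoke the concentration theorem for sparse subgaussian symmetric matrices in \cite{CDG23} directly, since that result is stated in the exact form needed here.
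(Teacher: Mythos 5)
Your proposal is correct and takes essentially the same route as the paper: the paper's proof simply verifies convexity of the operator norm, the Lipschitz bound $\abs{\norm{A}-\norm{B}}\le\norm{A-B}_F$, counts the $Nd$ nonzero entries, and invokes \cite{CDG23}*{Proposition A.1} directly — exactly the ``shorter route'' you mention at the end. Your additional truncation-plus-Talagrand sketch is in effect a proof of that cited proposition rather than a different argument; the only caveat is that, as written, the claim that $\bP(\Omega^c)$ is dominated by the target bound fails for very large $t$ (where the target is far smaller than any polynomial in $(Nd)^{-1}$), a regime the black-box citation handles for you.
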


The major ingredient for the proof of Lemma~\ref{lem:Jnorm-conc} is the following concentration result for subgaussian matrices appeared in~\cite{CDG23}. We include this result for reader's convenience.

\begin{lem}[Proposition A.1~\cite{CDG23}]
Let $\mvX:=(X_1,\ldots, X_d)$ be a vector of independent $K$-subgaussian random variables for some $K\ge 1$. Let $f:\bR^d \to \bR$ be a convex 1-Lipschitz function. Then for $s\ge 0$,
\[
\bP(\abs{f(\mvX) - \E f(\mvX)}\ge s) \le 2 \exp\left(-\frac{cs^2}{K^2 \log d}\right),
\]
where $c>0$ is some universal constant. Moreover, $\E f(\mvX)$ can be replaced by any median of $f(\mvX)$. 
\end{lem}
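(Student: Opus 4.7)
The plan is to deduce this dimension-dependent concentration bound for convex $1$-Lipschitz functions of independent subgaussian variables from the classical Talagrand convex concentration inequality for bounded random variables via a truncation argument. Recall that if $\mvY = (Y_1, \ldots, Y_d)$ has independent coordinates with $Y_i \in [-T, T]$ almost surely and $\varphi : \bR^d \to \bR$ is convex and $1$-Lipschitz in the Euclidean norm, then Talagrand's inequality yields
\[
\bP\bigl(\abs{\varphi(\mvY) - M(\varphi(\mvY))} \ge s\bigr) \le 4 \exp\Bigl(-\frac{s^2}{16 T^2}\Bigr),
\]
where $M(\cdot)$ denotes any median. The key observation is that with the truncation level $T \asymp K\sqrt{\log d}$, the Talagrand exponent becomes $s^2/(K^2 \log d)$, matching the statement.

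First I would fix $T := C_1 K \sqrt{\log d}$ with $C_1$ a large universal constant and define the truncated vector $\tilde{\mvX} := (X_1 \ind_{\{|X_1|\le T\}}, \ldots, X_d \ind_{\{|X_d|\le T\}})$ together with the event $\mathcal{E} := \{\tilde{\mvX} = \mvX\}$. Using the $K$-subgaussian tail bound $\bP(|X_i| > t) \le 2 e^{-t^2/(2K^2)}$ and a union bound,
\[
\bP(\mathcal{E}^c) \le 2 d \exp\bigl(-T^2/(2K^2)\bigr) \le 2 \exp\bigl(-c_1 s^2 / (K^2 \log d)\bigr)
\]
for $s$ in the relevant regime (for very small $s$ the asserted bound is trivial after adjusting constants). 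The coordinates of $\tilde{\mvX}$ are independent and supported on $[-T,T]$, so Talagrand's inequality applies to $f(\tilde{\mvX})$ directly, giving a deviation bound around $M(f(\tilde{\mvX}))$ with exponent $-s^2/(16 T^2) = -c_2 s^2/(K^2 \log d)$.

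To pass from $f(\tilde{\mvX})$ back to $f(\mvX)$ and to replace the median by the expectation, I would proceed as follows. On $\mathcal{E}$ the two random vectors agree, so the concentration of $f(\tilde{\mvX})$ around its median transfers to $f(\mvX)$, while off $\mathcal{E}$ we absorb the probability using the estimate just displayed. It remains to control $|M(f(\tilde{\mvX})) - M(f(\mvX))|$ and the median–mean gap $|M(f(\mvX)) - \E f(\mvX)|$; both are bounded by absolute constants times $K\sqrt{\log d}$ by integrating the tail estimate $\bP(|f(\mvX) - M(f(\mvX))| > s) \le 4 \exp(-c s^2/(K^2 \log d))$ over $s \ge 0$. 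Absorbing these shifts into the exponent (at the cost of adjusting the constant $c$) yields the stated inequality both around the median and around the mean.

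The main obstacle is the bookkeeping around truncation: one has to verify that $f$, defined and $1$-Lipschitz on all of $\bR^d$, still satisfies the hypotheses of Talagrand's convex inequality when its input is restricted to $[-T,T]^d$, and that the constants do not degrade when trading the median for the mean. Neither issue is serious, since $f$ remains convex and $1$-Lipschitz on the restricted cube and the mean–median comparison is quantitative thanks to the concentration estimate just established; a peeling argument across dyadic scales of $\|\mvX\|_\infty$ would give the same conclusion but is less transparent than the single-threshold truncation above.
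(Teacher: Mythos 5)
This lemma is quoted in the paper directly from \cite{CDG23}*{Proposition A.1} without proof, so there is no internal argument to compare against; your truncation-plus-Talagrand strategy is indeed the standard route to results of this type, and most of the steps (applying Talagrand's convex concentration to the truncated vector, and controlling the median--mean and median--median gaps by $O(K\sqrt{\log d})$ via integration of the tail) are sound.

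There is, however, a genuine gap in the regime of large $s$. You fix the truncation level $T=C_1K\sqrt{\log d}$ once and for all and then claim $\bP(\mathcal{E}^c)\le 2d\exp(-T^2/(2K^2))\le 2\exp(-c_1s^2/(K^2\log d))$. The left-hand side equals $2d^{1-C_1^2/2}$, a quantity independent of $s$, while the right-hand side tends to $0$ as $s\to\infty$; the inequality therefore holds only for $s\lesssim C_1K\log d$. Your parenthetical remark disposes of the opposite end (small $s$, where the asserted bound exceeds $1$), but the event-based decomposition $\bP(\{|f(\mvX)-\E f(\mvX)|\ge s\})\le \bP(\cdots\cap\mathcal{E})+\bP(\mathcal{E}^c)$ can never beat the constant $\bP(\mathcal{E}^c)>0$ and thus cannot yield the stated bound for all $s\ge 0$. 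Nor can this be repaired by letting $T$ depend on $s$: the two requirements $d\,e^{-T^2/(2K^2)}\le e^{-cs^2/(K^2\log d)}$ and $s^2/T^2\gtrsim s^2/(K^2\log d)$ are incompatible once $s\gg K\log d$.

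The fix is to abandon the event $\mathcal{E}$ and instead use the Lipschitz property to write $|f(\mvX)-f(\tilde{\mvX})|\le \|\mvX-\tilde{\mvX}\|_2=(\sum_i X_i^2\ind_{\{|X_i|>T\}})^{1/2}$, then control this truncation error in probability rather than requiring it to vanish. A Chernoff bound with $\lambda\asymp K^{-2}$, together with $\E[e^{\lambda X_i^2}\ind_{\{|X_i|>T\}}]\le C e^{-T^2/(4K^2)}=Cd^{-C_1^2/4}$, gives $\bP(\|\mvX-\tilde{\mvX}\|_2\ge s/2)\le C'\exp(-cs^2/K^2)$ uniformly in $s\ge 0$, which is stronger than the target rate. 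Splitting $\bP(|f(\mvX)-M|\ge s)\le \bP(|f(\tilde{\mvX})-M|\ge s/2)+\bP(\|\mvX-\tilde{\mvX}\|_2\ge s/2)$ and applying Talagrand to the first term then closes the argument for every $s$.
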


\begin{proof}[Proof of Lemma~\ref{lem:Jnorm-conc}]
    We will apply the Proposition A.1 in~\cite{CDG23} for subgaussian random matrix to complete the proof. Note that in our setting, the operator norm is obviously a convex function. On the other hand, since for general matrices $A,B$,
    \[
    \abs{\norm{A} - \norm{B}} \le \norm{A-B}_F,
    \]
    where $\norm{\cdot}_F$ denotes the Frobenius norm. This implies the Lipschitz property of the operator norm. There are in total $Nd$ many nonzero entries in $J$, then directly from Proposition A.1 of~\cite{CDG23},
    \[
    \bP(\abs{\norm{J} - \E \norm{J}}\ge t) \le 2\exp\left(-\frac{Cdt^2}{\kappa^4 \log(Nd)}\right).
    \]
\end{proof}
Now we put the above two lemmas together to complete the proof of Theorem~\ref{thm:J-norm}.
\begin{proof}[Proof of Theorem~\ref{thm:J-norm}]
    Using the Assumption~\ref{ass:graph}, it is easy to see that $\E \norm{J} = O(1)$ from Lemma~\ref{lem:mean-oper-norm}. On the other hand, since $d \gg \log N$, the r.h.s in the concentration bound of Lemma~\ref{lem:Jnorm-conc} can be bounded as follows, 
    \[
    2\exp\left(-\frac{Cdt^2}{K^2 \log(Nd)}\right) \le 2\exp\left(-\frac{C'dt^2}{K^2 \log(N)}\right).
    \]
    Using $d\gg \log N$ in the Assumption~\ref{ass:graph}, we have the desired results. 
\end{proof}

\section{Proof of Theorem~\ref{thm:TN-check}}\label{sec:pf-thm-TN}

The proof of Theorem~\ref{thm:TN-check} uses a linear size set of the vertices of $\cG$ with all of its vertices having at least $c d$ many neighbors outside the set. We start by showing that such a set always exists.

For a subset of vertices $Q \subseteq [N]$, denote by $\dout_i(Q)$  the number of neighbors of $i$ outside~$Q$ in $\cG$, i.e., 
\[ \dout_i(Q) = | \{ j \in Q^c: (i, j) \in E  \}|.\]
\begin{defn}
   For $\delta> 0, k \ge 1$,  we call a subset of vertices $T \subseteq [N]$ of $\cG$ $(\delta, k)$-good if 
    $|T| \ge \delta N$ and $\dout_i(T) \ge k$ for all $i\in T$.
\end{defn}  
The following lemma guarantees the existence of a \emph{good} set $T$ under the assumption of $d_{\max} \le Cd$.
\begin{lem}\label{lem:T-set}
Let $\cG$ be any graph on $N \ge 2$ vertices such that  $d_{\max} \le Cd$. There exists a subset $T \subseteq [N]$ that is $(1/(16C), d/4)$-good.
\end{lem}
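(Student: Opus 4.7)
The plan is to apply the probabilistic method of Erd\H{o}s. Because the average degree is $d$ and the maximum degree is at most $Cd$, a macroscopic fraction of the vertex set must have degree comparable to the average. Precisely, letting $S := \{ i \in [N] : d_i \ge d/2\}$, the identity $Nd = \sum_i d_i$ gives
\[ Nd \;\le\; |S|\, Cd + (N - |S|)\, \tfrac{d}{2}, \]
which rearranges to $|S| \ge N/(2C-1) \ge N/(2C)$ (using $C\ge 1$, which must hold since $d_{\max}\ge d$). I would carve the desired set $T$ out of $S$ via random sampling.

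Form $T_0 \subseteq [N]$ by including each vertex independently with probability $p = 1/4$, and define
\[ T := \{ i \in T_0 : \dout_i(T_0) \ge d/4 \}. \]
Removing vertices from $T_0$ can only enlarge the neighbor-complement, so $\dout_i(T) \ge \dout_i(T_0) \ge d/4$ for every $i \in T$ automatically. Hence it suffices to show $\E |T| \ge N/(16C)$, which guarantees at least one realization with $|T| \ge N/(16C)$; fixing such a realization yields the lemma.

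To bound $\E |T|$ from below, fix $i \in S$ and note that the number of neighbors of $i$ that land in $T_0$ is distributed as $\mathrm{Bin}(d_i, p)$ and is independent of the event $\{i \in T_0\}$. Markov's inequality then gives
\[ \bP\bigl( \mathrm{Bin}(d_i, p) > d_i - \tfrac{d}{4} \bigr) \;\le\; \frac{p\, d_i}{d_i - d/4} \;\le\; 2p, \]
where the last step uses $d_i \ge d/2$, so that $d_i - d/4 \ge d_i/2$. Consequently
\[ \bP\bigl( i \in T_0 \text{ and } \dout_i(T_0) \ge d/4 \bigr) \;\ge\; p\,(1-2p) \;=\; \tfrac{1}{8}, \]
and summing over $i \in S$ yields $\E |T| \ge |S|/8 \ge N/(16C)$.

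I do not foresee any real obstacle here: the single delicate choice is to define $S$ with threshold $d/2$ (not $d/4$), so that the Markov denominator $d_i - d/4$ retains a constant fraction of $d_i$ and the failure probability is dominated by a small multiple of $p$. The constants $1/(16C)$ and $d/4$ in the statement are tuned to this computation, and the role of the lemma downstream is to furnish, under the mild spread condition $d_{\max} \le Cd$, a deterministic linear-size set $T$ each of whose vertices has $\Omega(d)$ external neighbors\,---\,exactly the feature required to run the small-ball argument for the principal submatrix $J_T$ outlined in Subsection~\ref{ssec:pf-idea}.
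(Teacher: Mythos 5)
Your proof is correct, and it reaches the lemma by a genuinely different route than the paper. The paper first invokes the classical max-cut bound (its Lemma~\ref{lem:bipar}, proved by a uniformly random set of size $\lfloor N/2\rfloor$) to produce a deterministic $S$ with $|E(S,S^c)|\ge Nd/4$, sets $T=\{i\in S:\dout_i(S)\ge d/4\}$, and then uses the hypothesis $d_{\max}\le Cd$ in an averaging step — bounding $\frac{1}{|S|}\sum_{i\in S}\dout_i(S)$ above by $Cd\,\bP(I\in T)+\frac{d}{4}(1-\bP(I\in T))$ and below by $d/2$ — to conclude $|T|\ge|S|/(4C)$. You instead use $d_{\max}\le Cd$ upstream, to show deterministically that the set $S$ of vertices of degree at least $d/2$ has size at least $N/(2C)$, and then sample a sparse random set $T_0$ of density $1/4$ and prune it; the first-moment bound $\bP(i\in T_0,\ \dout_i(T_0)\ge d/4)\ge p(1-2p)=1/8$ for $i\in S$ (which correctly uses the independence of $\{i\in T_0\}$ from the memberships of $i$'s neighbors, valid since the graph has no self-loops) gives $\E|T|\ge N/(16C)$. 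Both arguments exploit the same monotonicity ($\dout_i(T)\ge\dout_i(T_0)$ in yours, $\dout_i(T)\ge\dout_i(S)$ in the paper's) so that pruning costs nothing, and both land on exactly the constants $1/(16C)$ and $d/4$. One cosmetic difference: the paper's $T$ is automatically contained in a set of size $\lfloor N/2\rfloor$, whereas yours need not be; since neither the definition of a good set nor its downstream use in Section~\ref{sec:pf-thm-TN} requires $|T|\le N/2$ (only that $T$ and $T^c$ are disjoint, so the block $D_2$ has independent entries), this is immaterial.
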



To establish this lemma, we need an elementary result that states that given a graph $\cG$, we can always find a good expander subset $S \subseteq [N]$ with a large number of outgoing edges, see, e.g. Theorem 2.2.2 of~\cite{AS08}.
\begin{lem}\label{lem:bipar}
     Let $\cG=([N],E)$ be a graph with $N \ge 2$ vertices. Then there exists $ S\subseteq [N]$ with $|S| = \lfloor {N}/{2} \rfloor$ and $\abs{E(S,S^c)}\ge \abs{E}/{2} = {Nd}/{4}$, where
     $E(S,S^c) = \{ (i ,j) \in E: i \in S, j \in S^c \}$ and  $d$ is the total average degree.
 \end{lem}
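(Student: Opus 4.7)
The plan is to prove this via the classical probabilistic method for max-cut, in the balanced variant: pick $S \subseteq [N]$ uniformly at random from the collection of subsets of size $n := \lfloor N/2 \rfloor$ and compute $\mathbb{E}[\,|E(S, S^c)|\,]$ by linearity of expectation. The expected size of the cut will turn out to be at least $|E|/2$, and any deterministic realization of the random choice achieving the mean (which must exist) will serve as the desired $S$.

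The only quantitative input is a per-edge cut probability. For a fixed edge $(i,j) \in E$, a direct hypergeometric count gives
\[
\mathbb{P}\bigl(|\{i,j\} \cap S| = 1\bigr) \;=\; \frac{2\binom{N-2}{n-1}}{\binom{N}{n}} \;=\; \frac{2n(N-n)}{N(N-1)}.
\]
A short case analysis on the parity of $N$ shows this is at least $1/2$: when $N$ is even, $n = N/2$ and the ratio equals $\tfrac{N}{2(N-1)} \ge \tfrac12$; when $N$ is odd, $n = (N-1)/2$ and $N-n = (N+1)/2$, giving $\tfrac{N+1}{2N} > \tfrac12$. Summing over all edges then yields
\[
\mathbb{E}\bigl[\,|E(S, S^c)|\,\bigr] \;\ge\; \frac{|E|}{2} \;=\; \frac{Nd}{4},
\]
so some realization of $S$ with $|S| = n$ must satisfy $|E(S,S^c)| \ge Nd/4$.

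There is no substantive obstacle here; the argument is a one-line application of linearity of expectation once the per-edge cut probability bound is in hand, and the only verification needed is the routine parity computation above. This is precisely the argument underlying \cite{AS08}*{Theorem 2.2.2}, which the main text cites.
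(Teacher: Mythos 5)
Your proposal is correct and follows essentially the same route as the paper: a uniformly random balanced subset, the per-edge cut probability bounded below by $1/2$, and linearity of expectation. The only cosmetic difference is that you compute the cut probability directly as $\frac{2n(N-n)}{N(N-1)}$ with an explicit parity check, whereas the paper writes it as $1 - \bigl(\binom{\ell}{2}+\binom{N-\ell}{2}\bigr)/\binom{N}{2}$; these are the same quantity.
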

\begin{proof}
Let $\ell = \lfloor {N}/{2} \rfloor$. Choose a random set $U \subseteq [N]$ with $|U| = \ell$ uniformly at random from the set of all subsets of $[N]$ of size $\ell$. 
Note that for each $e\in E,$
\[
\bP(e\in E(U, U^c))  = 1 - \frac{{\binom{\ell}{2}}+ {\binom{N-\ell}{2}}}{{\binom{N}{2}}} \ge \frac{1}{2}.
\]
Since we can write 
\[ | E(U, U^c)|  = \sum_{e \in E} \ind_{\{  e \in E(U, U^c)\} }, \]
it follows from the linearity of expectation that  $\bE | E(U, U^c)|  \ge |E|/2. $  Therefore, there exists a (deterministic) subset $S$ of size $\ell$ such that $| E(S, S^c)| \ge |E|/2, $ as claimed.
\end{proof}

\begin{proof}[Proof of Lemma~\ref{lem:T-set}]
Fix a subset $S$ as guaranteed by the previous lemma. 
Let
\begin{align*}
   T=\Bigl\{i \in S: \dout_i(S) \ge \frac{d}{4}\Bigr\},
\end{align*}
which contains the vertices in $S$ with at least ${d}/{4}$ many outgoing edges to $S^c$. Now let $I$ be a uniform random variable on $S$. Write 
\begin{align*}
\frac{1}{\abs{S}}\sum_{i\in S} \dout_i(S) = \E \dout_I(S) & \le \max_{i\in S}\dout_i(S) \cdot \bP(I\in T) + \frac{d}{4} \bP(I \notin T)  \\
& \le Cd  \bP(I \in T) + \frac{d}{4} ( 1- \bP(I \in T)).
\end{align*}
Note that since $$ \frac{1}{\abs{S}}\sum_{i\in S} \dout_i(S)=\frac{E(S,S^c)}{|S|}\geq \frac{Nd/4}{\lfloor N/2\rfloor} \ge \frac{d}{2},$$ we have
\[
\frac12 \le (C-1/4) \bP(I \in T) +  \frac{1}{4}.
\]
It follows that
\[
\bP(I \in T) \ge \frac{1/4}{C- 1/4} \ge \frac{1}{4C},
\]
which implies $\abs{T} \ge {\abs{S}}/{(4C)} = {N}/{(16C)}$. Furthermore, for each $i \in T$, we have $\dout_i(T) \ge \dout_i(S) \ge d/4$. This completes the proof of Lemma~\ref{lem:T-set}.
\end{proof}
From now on, for the rest of Section~\ref{sec:pf-thm-TN}, we fix a $(1/(16C), d/4)$-good subset $T$ as ensured by Lemma~\ref{lem:T-set}. We employ a union bound over $\mvgs \in \{-1,+1\}^N$ to prove  Theorem~\ref{thm:TN-check}; the heart of the proof lies the following crucial uniform bound on the $\ell_2$-deviation of $\mvm(\mvgs)$ from a constant vector.

\begin{prop}\label{prop:TN-reduced}
Under the same conditions of Theorem~\ref{thm:TN-check}, there exist positive constants $\gd$ and $K$ such that for any   $N\geq 1$,
$\mvgs \in \{-1,+1\}^N$, and  $\gamma \in \bR$, we have
\begin{align}\label{eq:res-no-theta}
\bP\Bigl( \min_{ A\subseteq T: |A| \ge \frac{\abs{T}}{2}}  \norm{\mvm_A(\mvgs) - \gamma\mv1_A}_{2} \le \gd \sqrt{N}\Bigr) \le \frac{K}{10^N},
\end{align}
where $m_A(\mvgs):=(m_i(\mvgs))_{i\in A}$ and $\mv1_A:=(1)_{i\in A}$.
\end{prop}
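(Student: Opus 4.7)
The approach I would take exploits an independence structure induced by the off-diagonal block of $G$ between $T$ and $T^c$: since $G$ is symmetric but $T\cap T^c = \emptyset$, the entries $(g_{ij})_{i\in T,\,j\in T^c}$ are mutually independent, and the rows $(g_{ij})_{j\in T^c}$ indexed by $i\in T$ use disjoint subsets of these entries. The good-set structure supplied by Lemma~\ref{lem:T-set} guarantees that each such row has at least $d/4$ nonzero entries, and that $|T|\ge N/(16C)$.

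First, I would reduce the event in \eqref{eq:res-no-theta} to a counting statement. The minimum
\[
\min_{A\subseteq T,\,|A|\ge |T|/2}\,\|\mvm_A(\mvgs)-\gamma\mv1_A\|_2^2
\]
equals the sum of the $\lceil|T|/2\rceil$ smallest order statistics of $\{(m_i(\mvgs)-\gamma)^2\}_{i\in T}$, so a pigeonhole argument shows that on the event in \eqref{eq:res-no-theta},
\[
\bigl|\{i\in T:\,|m_i(\mvgs)-\gamma|\le \delta_0\}\bigr|\ge |T|/4, \quad \text{where } \delta_0:=8\delta\sqrt{C}.
\]

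Next, I would carry out the small-ball estimate for individual indices. Split $m_i(\mvgs)=m_i^{\mathrm{in}}(\mvgs)+m_i^{\mathrm{out}}(\mvgs)$ at each $i\in T$ according to whether the neighbour $j$ lies in $T$ or in $T^c$. Conditioning on $(g_{ij})_{i,j\in T}$ and $(g_{ij})_{i,j\in T^c}$, each $m_i^{\mathrm{in}}(\mvgs)$ becomes deterministic, while $\{m_i^{\mathrm{out}}(\mvgs)\}_{i\in T}$ becomes an independent collection of centered sums, each involving at least $d/4$ summands of variance $1/d$ and, by \eqref{eq:disorder-cond}, uniformly bounded third moments. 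The classical Berry--Esseen theorem then yields, uniformly in the conditioning and in the effective target $\gamma-m_i^{\mathrm{in}}(\mvgs)$,
\[
\bP\bigl(\,|m_i(\mvgs)-\gamma|\le \delta_0\,\bigm|\,\text{conditioning}\,\bigr)\le \frac{4\delta_0}{\sqrt{2\pi}}+\frac{c_\ast B}{\sqrt{d}}=:p_0.
\]

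Finally, I would tensorize. By conditional independence of the row events, the probability that at least $|T|/4$ of the events $\{|m_i(\mvgs)-\gamma|\le\delta_0\}$ occur is at most
\[
\binom{|T|}{\lceil|T|/4\rceil}\,p_0^{|T|/4}\le (4e\,p_0)^{|T|/4}.
\]
Since $|T|/4\ge N/(64C)$, choosing $\delta$ small (so that $\delta_0=8\delta\sqrt{C}$ is small) and invoking $d\to\infty$ to suppress the Berry--Esseen error makes $4ep_0<10^{-64C}$, which gives $(4ep_0)^{|T|/4}\le 10^{-N}$ for all sufficiently large $N$; the finitely many small values of $N$ are absorbed into the constant $K$. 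The main obstacle I anticipate is precisely this quantitative calibration: $p_0$ must drop below a threshold determined by the density constant $C$, which is possible only because $T$ has simultaneously linear size and linearly many independent off-diagonal entries per row. Without such a good set $T$, the symmetry of $J$ would couple the fields $(m_i)_{i\in T}$ intricately and the above tensorization would break down.
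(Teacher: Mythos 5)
Your proposal is correct and follows the same core strategy as the paper: exploit the $(1/(16C),d/4)$-good set $T$ from Lemma~\ref{lem:T-set}, use the mutual independence of the off-diagonal block entries $(g_{ij})_{i\in T,\,j\in T^c}$, condition out the remaining blocks so that the outer parts of the fields $(m_i(\mvgs))_{i\in T}$ become independent, apply a one-dimensional small-ball estimate per row (your Berry--Esseen bound is exactly what \cite{RV08}*{Corollary 2.9}, the paper's Lemma~\ref{thm:small-ball}, delivers, with the same $\norm{\mva_i}_2\in[1/2,\sqrt C]$ and $1/\sqrt{r_i}\le 2/\sqrt d$ computation), and then tensorize over the $\ge N/(32C)$ independent coordinates. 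The one genuine difference is how the minimum over $A$ is handled: the paper proves the bound for each fixed $A$ via the tensorization lemma \cite{RV08}*{Lemma 2.2} and then passes to the minimum (which implicitly costs a union bound over the $2^{|T|}$ admissible sets $A$, absorbed by shrinking $\delta$), whereas you observe that the minimum equals the sum of the $\lceil|T|/2\rceil$ smallest order statistics of $(m_i(\mvgs)-\gamma)^2$, reduce by pigeonhole to the event that at least $|T|/4$ coordinates land in an interval of length $2\delta_0$ with $\delta_0=8\delta\sqrt C$, and bound that by $\binom{|T|}{\lceil|T|/4\rceil}p_0^{|T|/4}\le(4ep_0)^{|T|/4}$. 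Your route dispenses with the union bound over subsets $A$ entirely and makes the quantitative calibration ($4ep_0<10^{-64C}$, using $d\to\infty$ to kill the Berry--Esseen error) fully explicit, which is a slightly cleaner way to arrive at the same $K\cdot 10^{-N}$ bound.
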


We
first see how this proposition implies that $T_N(\mvgs)$ is bounded away from zero uniformly in $\mvgs$ with high probability, which is a weaker form of Theorem~\ref{thm:TN-check}. The proof of Proposition~\ref{prop:TN-reduced} is deferred to Subsection~\ref{ssec:pf-TN}.

\begin{prop}\label{prop:TN}
   Under the same conditions of Theorem~\ref{thm:TN-check}, there exist positive constants $c_1,c$, and $K$ such that for any $N\geq 1,$
    \[
    \bP\bigl(\min_{\mvgs}T_N(\mvgs) \le c_1\bigr) \le K \exp( - cN).
    \]
\end{prop}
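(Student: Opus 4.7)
The plan is to promote the pointwise estimate of Proposition~\ref{prop:TN-reduced} to a uniform lower bound on $T_N(\mvgs)$ via an $\epsilon$-net in $\gamma$ combined with a union bound over spin configurations. Since $T\subseteq[N]$, for every $\mvgs$,
\[
    T_N(\mvgs) \;=\; \frac{2}{N}\inf_{\gamma\in\bR}\|\mvm(\mvgs)-\gamma\mv1\|_2^2 \;\ge\; \frac{2}{N}\inf_{\gamma\in\bR}\|\mvm_T(\mvgs)-\gamma\mv1_T\|_2^2,
\]
so it suffices to show that the right-hand side is bounded below by a positive constant uniformly in $\mvgs$, with probability at least $1-Ke^{-cN}$.

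First I would confine the optimal $\gamma$ to a bounded (but possibly $N$-dependent) interval. The unique minimizer is $\gamma^\ast(\mvgs)=|T|^{-1}\sum_{i\in T}m_i(\mvgs)$, and
\[
|\gamma^\ast(\mvgs)|\;\le\;\max_{\mvgs}\|\mvm(\mvgs)\|_\infty\;\le\;\frac{d_{\max}}{\sqrt{d}}\max_{(i,j)\in E}|g_{ij}|.
\]
Markov's inequality together with the third-moment hypothesis $\E|g_{ij}|^3\le B$ yields $\bP(\max_{(i,j)\in E}|g_{ij}|>2^N)\le B|E|\cdot 8^{-N}\le Ke^{-cN}$, so on an event $\mathcal{E}_1$ of probability at least $1-Ke^{-cN}$, every $\gamma^\ast(\mvgs)$ lies in a deterministic interval $[-M,M]$ with $\log M = O(N)$.

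Next, fix the constant $\delta$ from Proposition~\ref{prop:TN-reduced}, set $\epsilon=\delta/2$, and take an $\epsilon$-net $\mathcal{D}\subset[-M,M]$ with $|\mathcal{D}|\le 4M/\delta=e^{O(N)}$. Applying Proposition~\ref{prop:TN-reduced} (with the choice $A=T$ inside the infimum) and taking a union bound over $\mvgs\in\{-1,+1\}^N$ and $\gamma\in\mathcal{D}$,
\[
\bP\Bigl(\exists\,\mvgs,\gamma\in\mathcal{D}:\,\|\mvm_T(\mvgs)-\gamma\mv1_T\|_2\le \delta\sqrt{N}\Bigr)\;\le\;2^N|\mathcal{D}|\cdot K\cdot 10^{-N},
\]
which remains exponentially small because the $10^{-N}$ factor comfortably dominates $2^N|\mathcal{D}|\le(2e)^N$ up to polynomial terms. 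Denote the complementary event by $\mathcal{E}_2$.

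On $\mathcal{E}_1\cap\mathcal{E}_2$, the bound $\|\mvm_T(\mvgs)-\gamma\mv1_T\|_2>\delta\sqrt{N}$ holds for every $\gamma\in\mathcal{D}$. For a general $\gamma\in[-M,M]$ I would approximate by $\gamma'\in\mathcal{D}$ with $|\gamma-\gamma'|\le\epsilon$ and invoke the triangle inequality,
\[
\|\mvm_T(\mvgs)-\gamma\mv1_T\|_2\;\ge\;\|\mvm_T(\mvgs)-\gamma'\mv1_T\|_2-\epsilon\sqrt{|T|}\;\ge\;(\delta-\epsilon)\sqrt{N}=\tfrac{\delta}{2}\sqrt{N}.
\]
Since $\gamma^\ast(\mvgs)\in[-M,M]$ on $\mathcal{E}_1$, this uniform lower bound upgrades from $[-M,M]$ to all of $\bR$, yielding $T_N(\mvgs)\ge\delta^2/2=:c_1$ uniformly in $\mvgs$. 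The genuine difficulty is already encapsulated in Proposition~\ref{prop:TN-reduced}; the only quantitative point to monitor here is that its $10^{-N}$ tail must absorb both the spin union ($2^N$) and an exponentially large $\gamma$-net---this is comfortable and is precisely what the base $10$ in that proposition is tuned for.
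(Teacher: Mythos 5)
Your proof is correct and follows essentially the same route as the paper's: confine $\gamma$ to an exponentially large interval via a crude Markov bound, discretize with a constant-mesh net, apply Proposition~\ref{prop:TN-reduced} pointwise, and union bound over the $2^N$ spin configurations and the exponentially large net, relying on the $10^{-N}$ tail to absorb both. The only cosmetic differences are that you bound $\|\mvm(\mvgs)\|_\infty$ via the third moments of the $g_{ij}$ while the paper bounds $|\bar m(\mvgs)|$ via $\E|J_{ij}|$, and you take $A=T$ inside the minimum of Proposition~\ref{prop:TN-reduced} rather than carrying the minimum over $A$; neither affects the argument.
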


\begin{proof}
First, for any $M>0,$ if $
\max_{\mvgs}|\bar m(\mvgs)|\leq M,$ 
we can write
\begin{align*}
\frac1N\min_{\mvgs}  \sum_{i \in [N]} (m_i(\mvgs) - \bar{m}(\mvgs))^2& = \frac1N\min_{\mvgs}  \norm{\mvm(\mvgs) - \bar{m}(\mvgs) \mv1}_2^2\\
&= \frac{1}{N} \min_{\mvgs} \inf_{\gamma\in [-M,M]}\norm{\mvm(\mvgs) - \gamma \mv1}_2^2,
\end{align*}
which implies that
\begin{align*}
     \bP\Bigl(\frac{1}{N}\min_{\mvgs} \inf_{\gamma\in \bR} \bigl\|\mvm(\mvgs) - \gamma\mv1\bigr\|_2^2 \le c_1\Bigr) 
    &\le  \bP\left(\max_{\mvgs} \abs{\bar{m}(\mvgs)} > M\right) \\
   & + \bP\Bigl( \frac{1}{N} \min_{\mvgs} \inf_{\gamma\in [-M, M]}\norm{\mvm(\mvgs) - \gamma  \mv1}_2^2 \le c_1\Bigr).
\end{align*}
To bound the two probabilities on the right-hand side, first of all, note that
\[
\abs{\bar{m}(\mvgs)} = \Bigl|\frac1N\sum_{i\in[N]} m_i(\mvgs)\Bigr|\le \frac1N \sum_{i,j\in[N]} \abs{J_{ij}},
\]
 by Markov's inequality, for a large constant $M=2^N$,
\begin{align*}
\bP\left(\max_{\mvgs} \abs{\bar{m}(\mvgs)} > M\right) &\le \frac{\sum_{i,j\in[N]} \E \abs{J_{ij}}}{NM} \\
&= \frac{d  \E \abs{J}}{2^N} \le \frac{C}{1.5^N}.
\end{align*}
As for the second probability, consider an arbitrary partition $\mathcal{D}=(e_i)_{i\in [2M]}$ of $[-M,M]$ with $|e_i-e_{i-1}|<\eps$ for $i\in [2M]$.
From this, 
\[
\frac1N \min_{\mvgs} \inf_{\gamma \in[-M,M]} \norm{\mvm(\mvgs) - \gamma  \mv1}_2^2 \ge \frac1N \min_{\mvgs} \inf_{\gamma \in \cD} \norm{\mvm(\mvgs) - \gamma  \mv1}_2^2 - \eps^2.
\]
and thus,
\[
\bP\Bigl(\frac1N \min_{\mvgs  } \inf_{\gamma\in [-M,M]} \norm{\mvm(\mvgs) - \gamma  \mv1}_2\le c_1 \Bigr) \le \bP\Bigl(\frac1N \min_{\mvgs } \inf_{\gamma\in \cD} \norm{\mvm(\mvgs) - \gamma  \mv1}_2\le c_1 + \eps^2 \Bigr).
\]
Applying Proposition~\ref{prop:TN-reduced} with $\eps^2+c_1= \gd^2$, we have that for any $\mvgs$ and $\gamma \in \cD$,

\[
\bP(\norm{\mvm(\mvgs)-\gamma  \mv1}_2 \le \gd \sqrt{N}) \le \bP\Bigl( \min_{ A\subseteq T: |A| \ge \frac{\abs{T}}{2}}  \norm{\mvm_A(\mvgs) - \gamma \mv1_A}_{2,A} \le \gd \sqrt{N}\Bigr) \le \frac{K}{10^N}.
\]
Finally taking union bound over $\mvgs$ and $\gamma$ yields
\[
\bP\Bigl(\min_{\mvgs} \inf_{\gamma\in \cD} \norm{\mvm(\mvgs) - \gamma \mv1}_2\le \gd \sqrt{N}\Bigr) \le c' \cdot 10^{-N}\cdot 2^{N} \cdot M,
\]
completing our proof.
\end{proof}



\subsection{Proof of Proposition~\ref{prop:TN-reduced}}\label{ssec:pf-TN}
The proof of Proposition~\ref{prop:TN-reduced} relies on certain small-ball probability estimates. To this end, we first recall the following classical small-ball probability result for the linear combination of independent random variables. 

\begin{lem}[Corollary 2.9~\cite{RV08}]\label{thm:small-ball}
Let $\xi_1, \ldots, \xi_n$ be independent centered random variables with variances at least 1 and third moments bounded by $B$. Then for any $\mva \in \bR^n$ and $\eps> 0$, one has 
\[
\sup_{v\in \bR} \bP\Bigl(\Bigl|\sum_{k=1}^n a_k \xi_k - v\Bigr| \le \eps\Bigr) \le \sqrt{\frac{2}{\pi}} \frac{\eps}{\norm{\mva}_2} + C_0 B \Bigl(\frac{\norm{\mva}_3}{\norm{\mva}_2} \Bigr),
\]
where $C_0$ is some absolute constant.
\end{lem}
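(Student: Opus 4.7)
The statement is a classical L\'evy concentration estimate for linear combinations of independent random variables, which follows straightforwardly from the Berry--Esseen theorem. My plan is to compare $S := \sum_k a_k \xi_k$ with its Gaussian analog $G \sim N(0, V^2)$, where $V^2 := \sum_k a_k^2 \var(\xi_k) \ge \norm{\mva}_2^2$ (because $\var(\xi_k) \ge 1$). The leading term $\sqrt{2/\pi}\,\eps/\norm{\mva}_2$ is nothing but the L\'evy concentration function of $G$, controlled by the maximum Gaussian density $(V\sqrt{2\pi})^{-1}$: for any interval $I$ of length $2\eps$,
\[
\bP(G \in I) \le \frac{2\eps}{V\sqrt{2\pi}} = \sqrt{\frac{2}{\pi}}\,\frac{\eps}{V} \le \sqrt{\frac{2}{\pi}}\,\frac{\eps}{\norm{\mva}_2}.
\]

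To obtain the error term, I would invoke the Berry--Esseen theorem applied to $S = \sum_k Y_k$ with $Y_k := a_k \xi_k$. Since $\E Y_k = 0$, $\sum_k \var(Y_k) = V^2 \ge \norm{\mva}_2^2$, and $\sum_k \E|Y_k|^3 = \sum_k |a_k|^3\, \E|\xi_k|^3 \le B \norm{\mva}_3^3$, the theorem yields
\[
\sup_{x \in \bR} \bigl|\bP(S \le x) - \bP(G \le x)\bigr| \le \frac{C_{\mathrm{BE}}\, B\, \norm{\mva}_3^3}{V^3} \le \frac{C_{\mathrm{BE}}\, B\, \norm{\mva}_3^3}{\norm{\mva}_2^3}.
\]
Applying this to $I = [v-\eps, v+\eps]$ and combining with the Gaussian concentration estimate gives
\[
\bP\bigl(|S - v| \le \eps\bigr) \le \sqrt{\frac{2}{\pi}}\,\frac{\eps}{\norm{\mva}_2} + \frac{2C_{\mathrm{BE}}\, B\, \norm{\mva}_3^3}{\norm{\mva}_2^3}.
\]
Finally, monotonicity of $\ell^p$-norms on $\bR^n$ gives $\norm{\mva}_3 \le \norm{\mva}_2$, hence $\norm{\mva}_3^3/\norm{\mva}_2^3 \le \norm{\mva}_3/\norm{\mva}_2$, producing the claimed bound with $C_0 = 2 C_{\mathrm{BE}}$ after taking the supremum over $v \in \bR$.

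The main \emph{obstacle} is essentially the Berry--Esseen theorem itself, which I would quote as a black box. If a fully self-contained proof were required, I would derive it via Esseen's Fourier smoothing inequality together with the Taylor bound $\phi_{\xi_k}(s) = 1 - s^2 \var(\xi_k)/2 + O(B|s|^3)$ coming from the third-moment hypothesis. A more direct route bypassing the CDF comparison is to apply Esseen's concentration inequality $\sup_v \bP(|S-v| \le \eps) \le C \eps \int_{|t| \le 1/\eps} |\phi_S(t)|\, dt$ and bound $|\phi_S(t)| \le \exp(-c\norm{\mva}_2^2 t^2 + c' B\norm{\mva}_3^3 |t|^3)$ via the symmetrization identity $|\phi_{\xi_k}(s)|^2 = \E \cos(s(\xi_k - \xi_k'))$; splitting the integral at $|t| \sim \norm{\mva}_2^2/(B\norm{\mva}_3^3)$ gives the same two-term bound, but extracting the sharp constant $\sqrt{2/\pi}$ along this Fourier route requires noticeably more care than via Berry--Esseen.
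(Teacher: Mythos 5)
The paper gives no proof of this lemma at all---it is quoted verbatim from \cite{RV08}*{Corollary 2.9}---and your derivation (non-i.i.d.\ Berry--Esseen comparison with $G\sim N(0,V^2)$, $V^2=\sum_k a_k^2\var(\xi_k)\ge\norm{\mva}_2^2$, plus the Gaussian L\'evy concentration bound $\sqrt{2/\pi}\,\eps/V$) is correct and is essentially the argument behind the cited result. You in fact prove the stronger bound with error term of order $B(\norm{\mva}_3/\norm{\mva}_2)^3$ and correctly relax it to the stated first-power form via $\norm{\mva}_3\le\norm{\mva}_2$, so the only external input is the Berry--Esseen theorem itself, which is a fair black box given that the paper treats the entire lemma as one.
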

We also need the following tensorization result.
\begin{lem}[Lemma 2.2~\cite{RV08}]\label{lem:tensor}
    Let $\zeta_1, \ldots \zeta_n$ be independent random variables. Assume that there exist constants $K$ and $\eps_0>0$ such that for each $i$, 
    $\bP(\abs{\zeta_i}< \eps) \le K \eps$ for all $\eps\ge \eps_0$.
     Then there exists an absolute constant $C_1$ such that 
     \[
     \bP\Bigl(\sum_{i=1}^n \zeta_i^2 < \eps^2 n\Bigr) \le (C_1 K\eps)^n\]
     for all $\eps\ge \eps_0$.
\end{lem}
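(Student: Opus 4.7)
The plan is to reduce the tail bound on $\sum_{i=1}^n \zeta_i^2$ to a product of one-dimensional Laplace transforms by an exponential Markov argument, and then control each Laplace transform via the small-ball hypothesis and the layer-cake formula. Concretely, I would start from
\[
\bP\Bigl(\sum_{i=1}^n \zeta_i^2 < \eps^2 n\Bigr) = \bP\Bigl(e^{-\sum_{i=1}^n \zeta_i^2/\eps^2} > e^{-n}\Bigr) \le e^{n} \prod_{i=1}^n \E e^{-\zeta_i^2/\eps^2},
\]
using Markov's inequality and the independence of the $\zeta_i$. The whole problem thus reduces to showing a uniform one-variable bound of the form $\E e^{-\zeta_i^2/\eps^2} \le C K \eps$, valid for every $i$ and every $\eps \ge \eps_0$, for some absolute constant $C$.

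The one-variable bound I would obtain via the layer-cake representation
\[
\E e^{-\zeta_i^2/\eps^2} = \int_0^\infty \bP(|\zeta_i| < u) \cdot \frac{2u}{\eps^2} e^{-u^2/\eps^2}\,du,
\]
which follows from the change of variables $u = \eps\sqrt{-\log s}$ inside $\E e^{-\zeta_i^2/\eps^2} = \int_0^1 \bP(|\zeta_i| < \eps\sqrt{-\log s})\,ds$. The small-ball hypothesis gives $\bP(|\zeta_i| < u) \le Ku$ for $u \ge \eps_0$, while for $u < \eps_0$ monotonicity yields $\bP(|\zeta_i| < u) \le \bP(|\zeta_i| < \eps_0) \le K\eps_0 \le K\eps$, since we restrict to $\eps \ge \eps_0$. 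The two regimes combine into the single uniform bound $\bP(|\zeta_i| < u) \le K(u + \eps)$ valid for all $u \ge 0$. Substituting and rescaling via $v = u/\eps$, the integral becomes
\[
\E e^{-\zeta_i^2/\eps^2} \le 2K\eps \int_0^\infty v(v+1) e^{-v^2}\,dv = K\eps \bigl(\tfrac{\sqrt{\pi}}{2} + 1\bigr),
\]
using $\int_0^\infty v e^{-v^2}\,dv = 1/2$ and $\int_0^\infty v^2 e^{-v^2}\,dv = \sqrt{\pi}/4$.

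Combining with the exponential Markov step, this gives
\[
\bP\Bigl(\sum_{i=1}^n \zeta_i^2 < \eps^2 n\Bigr) \le \Bigl(e\,\bigl(\tfrac{\sqrt{\pi}}{2} + 1\bigr) K\eps\Bigr)^n,
\]
which is the claimed bound with $C_1 := e\,(\sqrt{\pi}/2 + 1)$, depending only on universal constants (in fact independent of $K$ and $\eps_0$, except through the hypothesis). The main subtlety of the argument is not computational but structural: the hypothesis only controls $\bP(|\zeta_i| < u)$ for $u \ge \eps_0$, whereas the integrand in the layer-cake formula sees all $u \ge 0$; the restriction $\eps \ge \eps_0$ is precisely what allows the crude bound $\bP(|\zeta_i| < \eps_0) \le K\eps_0$ to be absorbed into a term of the correct order $K\eps$, rather than producing an unwanted additive $K\eps_0$ that would spoil the tensorized bound.
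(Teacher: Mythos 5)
Your proof is correct, and it is essentially the standard argument: the paper does not prove this lemma but simply quotes it from Rudelson--Vershynin, whose proof is exactly your exponential-Markov tensorization $\bP(\sum_i\zeta_i^2<\eps^2 n)\le e^n\prod_i\E e^{-\zeta_i^2/\eps^2}$ combined with the layer-cake bound $\E e^{-\zeta_i^2/\eps^2}\lesssim K\eps$ using the small-ball hypothesis (your handling of the regime $u<\eps_0$ via $\eps\ge\eps_0$ is the right way to absorb it). Nothing further is needed.
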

Our goal is to combine Lemma~\ref{thm:small-ball} and Lemma~\ref{lem:tensor} to establish some small-ball probability estimate for the vector $\mvm(\mvgs)$. However, because the matrix $J$ is symmetric, the random variables $m_1(\mvgs), m_2(\mvgs), \ldots, m_N(\mvgs)$ are not independent. To circumvent this obstacle, we will make use of the fact that the entries in any off-diagonal block sub-matrices of $J$ are independent. In particular, we shall work with the sub-matrix $D_2$ of $J$ defined on the block $T\times T^c$, where $T$ is a $(1/(16C), d/4)$-good subset as in Lemma~\ref{lem:T-set}. With this choice of $T$, we partition the vertex set as $[N] = T \cup ([N]\setminus T)$ and accordingly write 
$\mvgs = (\mvgs^{(1)}, \mvgs^{(2)})\in\{-1,+1\}^T\times\{-1,+1\}^{[N]\setminus T}$. Similarly we represent $J$ (after possible permutation of the $N$ vertices) in the following block form,



\begin{figure}
    \centering
    \includegraphics[scale=0.7]{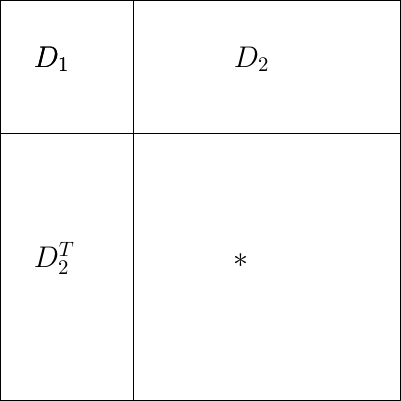}
    \caption{Block decomposition of matrix $J$, where $D_1$ is a $\abs{T}\times\abs{T}$ matrix with row and column index set $T$, while $D_2$ has row index set $T$ and column index set $[N]\setminus T$. It can be seen that the entries in the block $D_2$ are independent from each other.}
    \label{fig:decom}
\end{figure}

\begin{align}\label{eq:D1-D2}
J = \begin{pmatrix}
    D_1 & D_2 \\
    D_2^T & * 
\end{pmatrix},
\end{align}
where $D_1 \in \bR^{\abs{T}\times \abs{T}}$ and $D_2 \in \bR^{\abs{T}\times (N-\abs{T})}$, also see the visualization in Figure~\ref{fig:decom}.
Let $r_i$ be the number of non-zero elements in the $i$-th row of $D_2$.
Note that $r_i$ is same as $\dout_i(T)$ for $i\in T$. It follows from the definition of $T$, 
\begin{align}\label{eq:num-nonzero}
\frac{d}{4} \le r_i \le C d.
\end{align}
Under the above decomposition, 
\[
\quad (D_2 \mvgs^{(2)})_i  = \sum_{j\in [N]\setminus T} J_{ij} \gs_j,\,\,\forall i\in T,
\]
it can be readily seen that the coordinates in $D_2\mvgs^{(2)}$ are now independently from each other. We now use this property and the bound in~\eqref{eq:num-nonzero} to establish the desired small ball probability estimates and use them to further prove the Proposition~\ref{prop:TN-reduced}. 

\begin{proof}[Proof of Proposition~\ref{prop:TN-reduced}]

Recall that $A=(a_{ij})_{i,j}$ is the adjacency matrix of $\mathcal{G}.$
First, as we have explained above that $(D_2\mvgs^{(2)})_{i\in T} = (\la \mva_i ,\mvg_i \ra)_{i\in T}$ 
has independent coordinates and proper moment assumptions, where \begin{align*}
    \mva_i &:= d^{-1/2} (a_{i,\abs{T}+1} \gs_{\abs{T}+1},\ldots, a_{i,N} \gs_{N}),\\
    \mvg&:=(g_{i,|T|+1}, g_{i,|T|+2,}, \ldots, g_{i,N}).
\end{align*}
Recall that \eqref{eq:num-nonzero} implies 
\[
r_i =\abs{\{j\in [N]\setminus T: a_{ij} =1 \}} \in [d/4, C d].
\]
In addition, since
\begin{align*}
    \norm{\mva_i}_2^2 = \frac{r_i}{d}\,\, \text{and} \,\,\norm{\mva_i}_3^3 = \frac{r_i}{d^{3/2}},
\end{align*}
we have 
\[
\norm{\mva_i}_2 \in [1/2, \sqrt{C}]\,\,\text{and}\,\, \Bigl( \frac{\norm{\mva_i}_3}{\norm{\mva_i}_2}\Bigr)^3  
= \frac{1}{\sqrt{r_i}} \le \frac{2}{\sqrt{d}}. 
\]
By Lemma~\ref{thm:small-ball}, we have that for any $i\in T$ and $\eps>0$
\begin{align}\label{eq:small-ball-1d}
\sup_{v\in \bR} \bP\bigl(\bigl|(D_2\mvgs^{(2)})_i - v\bigr|\le \eps\bigr) \le 2\sqrt{\frac{2}{\pi}}\eps + C_0 B \left( \frac{2}{\sqrt{d}}\right)^{1/3} \le K'( \eps + d^{-1/6}),
\end{align}
where $K'$ is a constant only depending on $B$. Note that the hypothesis of Lemma~\ref{lem:tensor} holds for $\eps_0 = d^{-1/6}$ with $K = 2K'$. 
Therefore, for any constant $\gd >0$ and for $d$ sufficiently large such that $\eps_0 = d^{-1/6} \le \delta $, by Lemma~\ref{lem:tensor}, we obtain for any $A \subseteq T$ with $\abs{A} \ge \abs{T}/{2}\ge {N}/(32C)$,
\[
\sup_{\mvv \in \bR^{\abs{A}}} \bP\bigl(\bigl\|(D_2 \mvgs^{(2)})_{i\in A} - \mvv\bigr\|_2^2 \le \gd^2 N\bigr) \le 
(C_1 K \delta)^{N/(32 C)}
\lesssim 10^{-N},
\]
where $C_1$ is as given in the Lemma~\ref{lem:tensor} and the last inequality follows by taking $\delta >0$ sufficiently small constant. 

Recall $\mvm_A(\mvgs) = (m_i(\mvgs))_{i\in A}$. 
Using the previous decomposition of $J$ in~\eqref{eq:D1-D2}, we observe that
\[
\norm{\mvm_A(\mvgs) - \gamma  \mv1_A}_2^2 \ge \bigl\|(D_1 \mvgs^{(1)} + D_2 \mvgs^{(2)})_{i \in A} - \gamma  \mv1_A\bigr\|_2^2.
\]
Then
\begin{align*}
    & \bP\bigl(\norm{\mvm_A(\mvgs) - \gamma \mv1_A}_2^2 \le \gd^2 N \bigr) \\
   \le & \bP\bigl(\bigl\|(D_1 \mvgs^{(1)} + D_2 \mvgs^{(2)})_{i\in A} - \gamma  \mv1_A\bigr\|_2^2\le \gd^2N \bigr) \\
   =  &  \E_{D_1} \Bigl[ \bP \bigl(\bigl\|(D_1 \mvgs^{(1)} + D_2 \mvgs^{(2)})_{i\in A}- \gamma \mv1_A\bigr\|_2^2\le \gd^2N  \big| (D_1\mvgs^{(1)})_{i\in A} \bigr) \Bigr]\\
   \le & \E_{D_1} \Bigl[ \sup_{\mvv \in \bR^{\abs{A}}}  \bP \left(\norm{(D_2\mvgs^{(2)})_{i\in A} - \mvv}_2^2\le \gd^2N  \right)\Bigr] \\
   \le & \sup_{\mvv \in \bR^{\abs{A}}} \bP\left(\norm{(D_2\mvgs^{(2)})_{i\in A} - \mvv}_2^2 \le \gd^2 N\right) \le \frac{K}{10^N}
\end{align*}
for some constant $K$. Since $A$ is arbitrary subset of $T$ with $\abs{A}\ge\abs{T}/2$, we have 
\begin{align*}
\bP\left( \min_{ A\subseteq T: |A| \ge \frac{\abs{T}}{2}}  \norm{\mvm_A(\mvgs) - \gamma\cdot \mv1_A}_{2} \le \gd \sqrt{N}\right) \le \frac{K}{10^N}.
\end{align*}

\end{proof}

\subsection{Proof of Theorem~\ref{thm:TN-check}}\label{ssec:pf-TN-tilde}
In this section, we will complete the proof of Theorem~\ref{thm:TN-check}. First, we prove the following result that states that with high probability, for each $\mvgs$, the variables $m_i(\mvgs)$ are bounded for at least half of the indices in $T$.

\begin{prop}\label{prop:mi-bound}
    Assume $d_{\max} \le C d$. There exist constants $M, K>0$  such that the following event 
   \[  \min_{\mvgs} |\{ i \in T : |m_i(\mvgs)| \le M \}| \ge \frac{|T|}{2}\]
   occurs with probability at least $1 - K \cdot 10^{-N}$.
\end{prop}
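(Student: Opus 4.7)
I will prove the proposition via a union bound over the $2^N$ configurations $\mvgs\in\{-1,+1\}^N$: it is enough to show, for each fixed $\mvgs$, that the bad event $\abs{\{i\in T:\abs{m_i(\mvgs)}>M\}}>\abs{T}/2$ has probability at most $K\cdot 20^{-N}$. By Markov's inequality,
\[
\bigl|\{i\in T:|m_i(\mvgs)|>M\}\bigr|\le M^{-2}\sum_{i\in T}m_i(\mvgs)^2,
\]
so it suffices to control the sum of squares. Since $\E\sum_{i\in T}m_i(\mvgs)^2=\sum_{i\in T}d_i/d\le C\abs{T}$ and $\abs{T}\ge N/(16C)$ by Lemma~\ref{lem:T-set}, the required upper deviation is on the scale $\abs{T}$.

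The key observation is that $\sum_{i\in T}m_i(\mvgs)^2$ is a positive-semidefinite quadratic form in the independent variables $\tilde g=(g_e)_{e\in E}$. Indeed, if for each edge $e=\{i,j\}$ we set $B_{ie}:=d^{-1/2}\sigma_j$ and $B_{je}:=d^{-1/2}\sigma_i$ (and $B_{ke}:=0$ for $k\notin e$), then $m_i(\mvgs)=\sum_e B_{ie}g_e$ and hence $\sum_{i\in T}m_i(\mvgs)^2=\tilde g^{T}Q\tilde g$ with $Q:=B_T^{T}B_T\succeq 0$, where $B_T$ denotes the restriction of $B$ to rows indexed by $T$. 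Since each edge has at most two endpoints in $T$, the column sums of $|B_T|$ are bounded by $2/\sqrt d$; since each row of $B_T$ has at most $d_{\max}\le Cd$ nonzero entries of magnitude $d^{-1/2}$, the row sums are bounded by $C\sqrt d$. These estimates are deterministic and give
\[
\norm{Q}\le\norm{B_T}^2\le\norm{B_T}_1\norm{B_T}_\infty\le 2C\qquad\text{and}\qquad\tr Q=\norm{B_T}_F^2=\sum_{i\in T}d_i/d\le C\abs{T},
\]
so that $\norm{Q}_F^2\le\norm{Q}\cdot\tr Q\le 2C^2\abs{T}$ by positive-semidefiniteness. A Hanson--Wright-type inequality then yields, for $t\ge 0$,
\[
\bP\bigl(\tilde g^{T}Q\tilde g-\E\tilde g^{T}Q\tilde g>t\bigr)\le 2\exp\Bigl(-c\min\bigl(t^2/\abs{T},\,t\bigr)\Bigr).
\]
Taking $t=\lambda\abs{T}$ for $\lambda$ a sufficiently large constant and setting $M^2=2(C+\lambda)$, the per-configuration tail is $\exp(-c'N)\le 20^{-N}$, and the union bound over the $2^N$ configurations of $\mvgs$ produces the claimed rate.

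\textbf{Main obstacle.} The delicate step is the quadratic-form concentration. Under the subgaussian hypothesis in force in Theorems~\ref{thm:free-en} and \ref{thm:conclu}, the Rudelson--Vershynin version of Hanson--Wright applies directly. Under the weaker finite-third-moment hypothesis of Theorem~\ref{thm:TN-check}, one must instead invoke a moment-based analogue of Hanson--Wright, or perform a truncation of each $g_e$ at a suitable level $R$ and bound the tail event $\{\max_e|g_e|>R\}$ separately by Markov, after which the truncated variables behave as bounded ones and a Bernstein/Hanson--Wright bound closes the argument. The free parameters $\lambda$ and $M$ must be chosen as constants depending on $C$ (and the moment parameter) so that the tail exponent $c\abs{T}\ge cN/(16C)$ exceeds $N\ln 20$, which is precisely what allows the $2^N$-fold union bound to produce the final $K\cdot 10^{-N}$ probability.
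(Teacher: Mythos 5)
Your route is genuinely different from the paper's, and under the hypotheses actually in force for this proposition it has a gap. The proposition sits in Section~\ref{sec:pf-thm-TN}, where the only moment assumption on $G$ is \eqref{eq:disorder-cond}: mean zero, unit variance, and bounded \emph{third} moments. Your deterministic bounds on $Q=B_T^TB_T$ (namely $\|Q\|\le 2C$, $\tr Q\le C|T|$, $\|Q\|_F^2\le\|Q\|\tr Q$) are correct, and under a subgaussian hypothesis Hanson--Wright would indeed close the argument exactly as you describe. But with only third moments the quantity $\sum_{i\in T}m_i(\mvgs)^2$ genuinely does \emph{not} have an exponentially small upper tail at scale $\Lambda|T|$: a single entry with $|g_{ij}|\gtrsim\sqrt{Nd}$ already pushes the sum of squares above $\Lambda|T|$, and such an entry occurs with probability of order $|E|\cdot(Nd)^{-3/2}\sim (Nd)^{-1/2}$, which is only polynomially small. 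So no concentration inequality can rescue the Markov reduction $|\{i:|m_i|>M\}|\le M^{-2}\sum_{i\in T}m_i^2$; the reduction itself is too lossy, because a huge sum of squares caused by one large $g_e$ affects only two coordinates $m_i$ and does not make half of $T$ large. Your proposed truncation fix fails for the same quantitative reason: to ensure $\max_e|g_e|\le R$ with the required probability $1-O(10^{-N})$ using only third moments forces $R\sim 10^{N/3}$, which destroys the linear (Bernstein) term in the exponent.

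The paper avoids this entirely by never passing to the sum of squares. It splits $J_T=J_T^++J_T^-$ with $J_T^+=(D_1^+,D_2)$ and $J_T^-=(D_1^-,\mathbf{0})$ ($D_1^\pm$ the strict upper/lower triangles of $D_1$), so that the coordinates of $m_T^+(\mvgs)=J_T^+\mvgs$ are mutually independent, and likewise for $m_T^-$. Chebyshev with $\var(m_i^\pm(\mvgs))\le d_i/d\le C$ gives $\bP(|m_i^\pm(\mvgs)|\ge M/2)\le 4C/M^2$ per coordinate, and then the \emph{count} $|\{i\in T:|m_i^\pm(\mvgs)|\le M/2\}|$ is a sum of independent indicators, so binomial concentration yields the $20^{-N}$ per-configuration bound with no tail assumption on $g$ beyond the second moment; intersecting the two events and union-bounding over $\mvgs$ finishes. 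The moral difference is that the paper applies the heavy-tail-sensitive step (Chebyshev) coordinatewise, where only a constant success probability is needed, and reserves the exponential concentration for bounded indicator variables. If you restrict to the subgaussian setting (as in Theorems~\ref{thm:free-en} and~\ref{thm:conclu}) your Hanson--Wright argument is a valid and arguably shorter alternative; to cover the third-moment setting of Theorem~\ref{thm:TN-check} you need something like the paper's counting argument.
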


\begin{proof}
Consider the matrix $J_T:=(D_1, D_2)$, where $D_1,D_2$ were defined in~\eqref{eq:D1-D2}. Note that for each fixed $\mvgs$,
\[
m_T(\mvgs) = J_T \mvgs = D_1\mvgs^{(1)} + D_2\mvgs^{(2)},
\]
Decompose $D_1$ as 
\[
D_1 = D_1^+ + D_1^-, 
\]
where 
\[
(D_{1}^+)_{ij} := \begin{cases}
    (D_1)_{ij}, & \text{if $i<j$,}\\
     0 , & \text{if $i\ge j$,}
     \end{cases}\,\,\text{and}\,\,(D_{1}^-)_{ij} := \begin{cases}
    (D_1)_{ij}, & \text{if $i>j$,}\\
     0 , & \text{if $i\le j$.}
     \end{cases}
\]
Accordingly, we have the decomposition for $J_T= J_T^+ + J_T^-$ with 
\[
J_T^+:= (D_1^+, D_2)\,\,\text{and}\,\, J_T^-:= (D_1^-, \mv0).
\]
In this way, it can be seen that the entries in $J_T^+$ are independent and so are the entries of $J_T^-$. We let $m_T^+(\mvgs):= J_T^+\mvgs$ and $m_T^-(\mvgs):=J_T^- \mvgs$, whose coordinates are now independent.  For $i\in T$, notice 
\begin{align*}
    \E m_i^{\pm}(\mvgs) = 0\,\,\text{and}\,\,\var(m_i^\pm(\mvgs)) = \E[(m_i^{\pm}(\mvgs))^2] \le  \frac{d_i}{d} \le C.
\end{align*}
The last inequality in the variance bound used the assumption $d_{\max} \le C d$ and the independence among the entries of $J_T^+, J_T^-$. Consequently, from Chebyshev's inequality, we have that for $i \in T$,
\[
\bP\Bigl(\abs{m_i^{\pm}(\mvgs)} \ge \frac{M}{2}\Bigr) \le \frac{4C}{M^2}.
\]
Next, set 
\begin{align*}
    B^{+}_{\mvgs}:= \Bigl\{i\in T,\Big | \abs{m_i^{+}(\mvgs)}\le \frac{M}{2} \Bigr\}\,\,\text{and}\,\, B^{-}_{\mvgs}:= \Bigl\{i\in T,\Big | \abs{m_i^{-}(\mvgs)}\le \frac{M}{2} \Bigr\}.
\end{align*}
Evidently,
\[
\abs{B_{\mvgs}^{+}} \sim \mathrm{Binomial}\left(\abs{T}, p_+ \right)\,\,\text{and}\,\,\abs{B_{\mvgs}^{-}} \sim \mathrm{Binomial}\left(\abs{T},p_-\right),
\]
where $p_-, p_+$ are some constants greater than $1- {4C}/{M^2}$. 
Now, by the binomial concentration,  one can choose sufficiently large $M$ such that 
\[
P\Bigl(\abs{B_{\mvgs}^+}, \abs{B_{\mvgs}^-} \ge \frac45 \abs{T}\Bigr) \ge  1- \frac{K}{20^N}
\]
for some constant $K>0$ independent of $N.$
Let $B_{\mvgs}:=B_{\mvgs}^+ \cap B_{\mvgs}^-$. Using the identity $$|B_{\mvgs}|=|B_{\mvgs}^+|+|B_{\mvgs}^-|-|B_{\mvgs}^+\cup B_{\mvgs}^-|,$$
with probability at least $1-K/  20^{N}$,
\[
\abs{B_{\mvgs}} \ge \frac45 \abs{T} + \frac45 \abs{T} - \abs{T} \ge \frac12 \abs{T},
\]
which readily implies that
\begin{align*}
    \abs{m_i(\mvgs)} \le \abs{m_i^+(\mvgs)} + \abs{m_i^-(\mvgs)} \le \frac{M}{2} + \frac{M}{2} = M
\end{align*}
for $i\in B_{\mvgs}$ and as a consequence,  \[ |\{ i \in T : |m_i(\mvgs)| \le M \}| \geq \frac{|T|}{2}. \]
Finally, applying union bound with resepct to $\mvgs \in \{-1,+1\}^N$ leads to the desired result.
\end{proof}

We proceed to establish the proof of Theorem~\ref{thm:TN-check} via this proposition. 

\begin{proof}[Proof of Theorem~\ref{thm:TN-check}]
For each fixed $\mvgs$, let 
\[ B_{\mvgs} =  \{ i \in T: \abs{m_i(\mvgs)}  \le M\}. \]
By Proposition~\ref{prop:mi-bound}, there exist a constant $M>0$ sufficiently large such that with probability at least $1-K/10^{N}$, 
we have $\min_{\mvgs}\abs{B_{\mvgs}}\ge {\abs{T}}/{2} \ge {N}/{(32C)}$.
Consequently,  for $i\in B_{\mvgs}$
\[
\theta_i(\mvgs) = \sech(\gb m_i(\mvgs)+h) \ge c'':= \sech(\gb M +|h|).
\]
Note that 
\begin{align*}
    \widetilde{T}_N(\mvgs) & \ge \frac{1}{N^2} \sum_{i,j \in B_{\mvgs}} (m_i(\mvgs) - m_j(\mvgs))^2\theta_i(\mvgs) \theta_j(\mvgs) \\
    & \ge \frac{(c'')^2 }{N^2}  \sum_{i,j \in B_{\mvgs}} (m_i(\mvgs) - m_j(\mvgs))^2 \\
    &= \frac{2 (c'')^2 }{N} \sum_{i \in B_{\mvgs}} (m_i(\mvgs) - \bar{m}_{B_{\mvgs}}(\mvgs))^2,
\end{align*}
where $\bar m_{B_{\mvgs}}(\mvgs)$ is the average of $\mvm_{B_{\mvgs}(\mvgs)}(\mvgs).$ 
Therefore, on the event $|B_{\mvgs}| \ge  \abs{T}/{2} $, we have
\begin{align}
 \widetilde{T}_N(\mvgs) &\ge \frac{2 (c'')^2 }{N}  \inf_{ \gamma \in [-M, M]} \sum_{i \in B_{\mvgs}} (m_i(\mvgs) - \gamma)^2  \nonumber \\
&\ge \frac{2 (c'')^2 }{N}  \inf_{ \gamma \in [-M, M]} \min_{ A \subseteq T: |A| \ge |T|/2} \sum_{i \in A} (m_i(\mvgs) - \gamma)^2 \nonumber  \\
&\ge \frac{2 (c'')^2 }{N}  \inf_{ \gamma \in \mathcal{D} } \min_{ A \subseteq T: |A| \ge |T|/2} \Bigl ( \norm{\mvm_A(\mvgs) - \gamma\cdot \mv1}_2   - \frac{\delta \sqrt{N}}{2}  \Bigr)_+^2,  \label{eq:T_N_tilde_lb_net}
\end{align}
where $\mathcal{D}$ is an arbitrary partition of $[-M,M]$ with mash $(\delta/2)$ and $|\mathcal{D}| \le 10 M \delta^{-1}$.
 Applying Proposition~\ref{prop:TN-reduced} for  a fixed $\gamma$ and $\mvgs$ in  \eqref{eq:T_N_tilde_lb_net} and then taking a union bound over $\gamma \in \mathcal{D}$ and $\mvgs $, we obtain, for $c_0  = (c'')^2  \delta^2/2$,
\begin{align*}
\bP\bigl(\min_{\mvgs} \widetilde{T}_N(\mvgs) \le c_0\bigr) \le K10^{- N} + |\mathcal{D}|2^N \cdot  K 10^{-N}.
\end{align*}
The proof of Theorem~\ref{thm:TN-check} now immediately follows.
\end{proof}







\bibliography{est.bib} 

\end{document}